\numberwithin{equation}{section}
\newcommand{\+}{\mkern1mu}
\newcommand{\tp}{\mkern2mu}
\newcommand{\twovector}[2]{\begin{pmatrix} #1 \\ #2 \end{pmatrix}}
\renewcommand\yesnumber{\refstepcounter{equation}\tag{\theequation}} % for numbering in the align* environment
\newcommand{\ud}{\,\mathrm{d}}	% upright d
\newcommand{\dx}{\ud x}
\newcommand{\ds}{\ud s}
\newcommand{\dWs}{\ud W_s}
\newcommand{\e}{\mathrm{\+ e}}
\newcommand{\N}{\mathbb{N}}
\newcommand{\R}{\mathbb{R}}
\DeclareMathOperator{\E}{\mathbb{E}}
\renewcommand{\P}{\mathbb{P}}
\newcommand{\bbone}{\mathds{1}}  % mathds benötigt das Package "dsfont"
\newcommand{\bbH}{\mathbb{H}}
\newcommand{\cF}{\mathcal{F}}
\newcommand{\cP}{\mathcal{P}}
\newcommand{\bfA}{\mathbf{A}}
\newcommand{\bfB}{\mathbf{B}}
\newcommand{\bfF}{\mathbf{F}}
\newcommand{\bfH}{\mathbf{H}}
\newcommand{\bfP}{\mathbf{P}}
\newcommand{\bfX}{\mathbf{X}}
\DeclarePairedDelimiter{\abs}{\lvert}{\rvert}
\DeclarePairedDelimiter{\norm}{\lVert}{\rVert}
\DeclareMathOperator{\CovOp}{CovOp}
\DeclareMathOperator{\Var}{Var}
\DeclareMathOperator{\Cov}{Cov}
\DeclareMathOperator{\id}{id}
\theoremstyle{definition}
\newtheorem{Def}{Definition}[section]
\theoremstyle{plain}
\newtheorem{lemma}[Def]{Lemma}
\newtheorem{theorem}[Def]{Theorem}
\newtheorem{cor}[Def]{Corollary}
\newtheorem{prop}[Def]{Proposition}
\theoremstyle{remark}
\begin{document}

\title{Lower bounds for weak approximation errors \\
		for spatial spectral Galerkin approximations \\
		of stochastic wave equations}
\author{Ladislas Jacobe de Naurois, Arnulf Jentzen, and Timo Welti\\[2mm]
\emph{ETH Z\"{u}rich, Switzerland}}
\date{\today}
\maketitle

\begin{abstract}
Although for a number of semilinear stochastic wave equations existence and uniqueness results for corresponding solution processes are known from the literature, these solution processes are typically not explicitly known and numerical approximation methods are needed in order for mathematical modelling with stochastic wave equations to become relevant for real world applications.
This, in turn, requires the numerical analysis of convergence rates for such numerical approximation processes.
A recent article by the authors proves upper bounds for weak errors for spatial spectral Galerkin approximations of a class of semilinear stochastic wave equations.
The findings there are complemented by the main result of this work, that provides lower bounds for weak errors which show that in the general framework considered the established upper bounds can essentially not be improved.
%Such lower bounds are obtained by showing lower bounds for weak convergence rates in the case of concrete examples of stochastic wave equations with additive noise and absent drift nonlinearity and particular choices of non-trivial test functions.
\end{abstract}

% \tableofcontents

%%% introduction
\section{Introduction}

In this work we consider numerical approximation processes of solution processes of stochastic wave equations and examine corresponding weak convergence properties.
As opposed to strong convergence, weak convergence even in the case of stochastic evolution equations with regular nonlinearities are still only poorly understood (see, e.g., \cite{Hausenblas2010, KovacsLindnerSchilling2014, KovacsLarssonLindgren2012, KovacsLarssonLindgren2013, Wang2015} for several weak convergence results for stochastic wave equations and, e.g., the references in Section~1 in \cite{DeNauroisJentzenWelti2015ArXiv} for further results on weak convergence in the literature).
Therefore, equations available to current numerical analysis are limited to model problems such as the ones considered in the present article that cannot take into account the full complexity of models for evolutionary processes under influence of randomness appearing in real world applications (see, e.g., the references in Section~1 in \cite{DeNauroisJentzenWelti2015ArXiv}).
The recent article~\cite{DeNauroisJentzenWelti2015ArXiv} by the authors provides upper bounds for weak errors for spatial spectral Galerkin approximations of a class of semilinear stochastic wave equations, including equations driven by multiplicative noise and, in particular, the hyperbolic Anderson model.
The purpose of this work is to show that the weak convergence rates for stochastic wave equations established in Theorem~1.1 in \cite{DeNauroisJentzenWelti2015ArXiv} can in the general setting there \emph{essentially not be improved}.
This is achieved by proving lower bounds for weak errors in the case of concrete examples of stochastic wave equations with additive noise and without drift nonlinearity (see Corollary~\ref{cor:lower_bound_exp} below).
We argue similarly to the reasoning in Section~6 in Conus et al.~\cite{ConusJentzenKurniawan2014} and Section~9 in Jentzen \& Kurniawan~\cite{JentzenKurniawan2015}.
First results on lower bounds for strong errors for two examples of stochastic heat equations were achieved in Davie \& Gaines~\cite{DavieGaines2001}.
Furthermore, lower bounds for strong errors for examples and whole classes of stochastic heat equations have been established in M\"uller-Gronbach et al.~\cite{Muller-GronbachRitterWagner2008} (see also the references therein) and in M\"uller-Gronbach \& Ritter~\cite{Muller-GronbachRitter2007}, respectively.
%In the case of finite-dimensional stochastic ordinary differential equations results on lower bounds for both strong and weak errors are known, see, e.g., the overview chapter M\"uller-Gronbach \& Ritter~\cite{Muller-GronbachRitter2008}.
Results on lower bounds for weak errors in in the case of a few specific examples of stochastic heat equations can be found in Conus et al.~\cite{ConusJentzenKurniawan2014} and in Jentzen \& Kurniawan~\cite{JentzenKurniawan2015}.

\begin{theorem} \label{thm:lowerbound}
For all real numbers $ \eta, T \in ( 0, \infty ) $,
every $ \R $-Hilbert space
$ ( H , \langle \cdot, \cdot \rangle_{ H } , \norm{\cdot}_{ H } ) $,
every probability space $ ( \Omega , \cF , \P ) $
with a normal filtration $ ( \mathbb{F}_t )_{ t \in [ 0 , T ] } $,
every $ \id_H $-cylindrical $ ( \Omega , \cF , \P, ( \mathbb{F}_t )_{ t \in [ 0 , T ] } ) $-Wiener process
 $ ( W_t )_{ t \in [ 0 , T ] } $,
and
every orthonormal basis $ ( e_n )_{ n \in \N } \colon \N \to H $ of $ H $
there exist
an increasing sequence
$ ( \lambda_n )_{ n \in \N } \colon \N \to (0,\infty) $,
a linear operator
$ A \colon D(A) \subseteq H \to H $
with
$ D(A) = \bigl\{ v \in H \colon \sum_{ n \in \N } \abs{ \lambda_n \langle e_n,v \rangle_{H} }^2 < \infty \bigr\} $
and
$ \forall \, v \in D(A) \colon A v = \sum_{ n \in \N } - \lambda_n \langle e_n,v \rangle_{ H } e_n $,
a family of interpolation spaces
$ ( H_r , \langle \cdot , \cdot \rangle_{ H_r } , \norm{\cdot}_{ H_r } )$, $ r \in \R $,
associated to $ - A $ (cf., e.g., \cite[Section~3.7]{SellYou2002}),
a family of $ \R $-Hilbert spaces $ ( \bfH_r , \langle \cdot , \cdot \rangle_{ \bfH_r } , \norm{\cdot}_{ \bfH_r } )$, $ r \in \R $,
with
$ \forall \,  r \in \R \colon ( \bfH_r , \langle \cdot , \cdot \rangle_{ \bfH_r } , \norm{\cdot}_{ \bfH_r } ) = \bigl( H_{ \nicefrac{r}{2} } \times H_{ \nicefrac{r}{2} - \nicefrac{1}{2} }, \langle \cdot, \cdot \rangle_{ H_{ \nicefrac{r}{2} } \times H_{ \nicefrac{r}{2} - \nicefrac{1}{2} } }, \norm{ \cdot }_{ H_{ \nicefrac{r}{2} } \times H_{ \nicefrac{r}{2} - \nicefrac{1}{2} } } \bigl) $,
families of functions
$ P_N \colon \bigcup_{ r \in \R } H_r \to \bigcup_{ r \in \R } H_r $, $ N \in \N \cup \{ \infty \} $,
and
$ \bfP_N \colon \bigcup_{ r \in \R } \bfH_r \to \bigcup_{ r \in \R } \bfH_r $, $ N \in \N \cup \{ \infty \} $,
with
$ \forall \, N \in \N \cup \{ \infty \}, r \in \R, u \in H_r, ( v , w ) \in \bfH_r \colon
\bigl( P_N (u) =  \sum_{ n = 1 }^N \langle (\lambda_n)^{-r} e_n , u \rangle_{ H_r } (\lambda_n)^{-r} e_n
\text{ and }
\bfP_N ( v , w ) = ( P_N ( v ) , P_N ( w ) ) \bigr) $,
a linear operator $ \bfA \colon D ( \bfA ) \subseteq \bfH_0 \to \bfH_0 $
with
$ D ( \bfA ) = \bfH_1 $
and
$ \forall \, (v,w) \in \bfH_1 \colon \bfA( v , w ) = ( w , A v ) $,
real numbers
$ \gamma, c \in ( 0, \infty ) $,
and functions
$ \xi \in \mathcal{L}^2 ( \P \vert_{\mathbb{F}_0} ; \bfH_{ \gamma } ) $,
$ \varphi \in C^{2}_\mathrm{b}( \bfH_0 , \R ) $,
$ \bfF \in C_{ \mathrm{b} }^2( \bfH_0, \bfH_0 ) $,
$ \bfB \in C_{ \mathrm{b} }^2( \bfH_0, \mathrm{HS}( H, \bfH_0 ) ) $
and $ ( C_\varepsilon )_{ \varepsilon \in ( 0, \infty ) } \colon ( 0, \infty ) \to [ 0, \infty ) $
with
$ \forall \, \beta \in ( \nicefrac{\gamma}{2}, \gamma ] \colon ( -A )^{ - \nicefrac{\beta}{2} } \in \mathrm{HS}( H_0 ) $,
$ \bfF \in C^{2}_\mathrm{b}( \bfH_0, \bfH_{ \gamma } ) $,
and
$ \bfB \in C^{2}_\mathrm{b}( \bfH_0, L( H, \bfH_\gamma ) ) $
such that
\begin{enumerate}[(i)]
\item
it holds that there exist
up to modifications unique
$( \mathbb{F}_t )_{ t \in [0,T] } $-predictable stochastic processes
$ \bfX^N \colon [ 0 , T ] \times \Omega \to \bfP_N ( \bfH_0 ) $, $ N \in \N \cup \{ \infty \} $,
which satisfy
for all $ N \in \N \cup \{ \infty \} $, $ t \in [ 0 , T ] $ that
$ \sup_{ s \in [ 0 , T ] } \norm{ \bfX_s^N }_{ \mathcal{L}^2 ( \P ; \bfH_0 ) } < \infty $
and
$ \P $-a.s.\ that
\begin{equation} \label{eq:intro,2}
\bfX_t^N =  \e^{t \bfA} \bfP_N \xi + \int_0^t \e^{ (t-s)\bfA } \bfP_N \bfF( \bfX_s^N ) \ds  + \int_0^t \e^{ (t-s)\bfA } \bfP_N \bfB( \bfX_s^N ) \dWs
\end{equation}
\item
and it holds
for all $ \varepsilon \in ( 0, \infty ) $, $ N \in \N $ that
\begin{equation} \label{eq:ratebound}
c \cdot ( \lambda_N )^{ - \eta } \leq \abs[\big]{ \E \bigl[ \varphi \bigl( \bfX_T^\infty \bigr)\bigr] - \E \bigl[\varphi \bigl( \bfX_T^N \bigr) \bigr] } \leq C_\varepsilon \cdot ( \lambda_N )^{ \varepsilon - \eta }.
\end{equation}
\end{enumerate}
\end{theorem}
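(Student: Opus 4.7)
The plan is to exhibit a concrete instance of~\eqref{eq:intro,2} for which the weak error can be evaluated almost by hand, in the spirit of Section~6 of~\cite{ConusJentzenKurniawan2014} and Section~9 of~\cite{JentzenKurniawan2015}. Take $\xi \equiv 0$, $\bfF \equiv 0$, and a state-independent (hence additive) noise coefficient $\bfB \equiv \hat{\bfB}$, so that~\eqref{eq:intro,2} reduces to a linear stochastic wave equation whose mild solution is a centered Gaussian random element of $\bfH_0$. Concretely, fix an increasing sequence of the polynomial form $\lambda_n = n^{\alpha}$ with $\alpha > 2/\gamma$, which secures $(-A)^{-\beta/2} \in \mathrm{HS}(H_0)$ for $\beta \in (\gamma/2, \gamma]$; define $A$, $(H_r)_{r \in \R}$, $(\bfH_r)_{r \in \R}$, $\bfA$, $(P_N)_N$, $(\bfP_N)_N$ by the formulas in the statement; and set $\hat{\bfB} h := \bigl(0,\sum_{n\in\N} (\lambda_n)^{(1-\gamma)/2} \langle e_n,h\rangle_H\, e_n\bigr)$. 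A short calculation verifies $\hat{\bfB} \in L(H,\bfH_\gamma) \cap \mathrm{HS}(H,\bfH_0)$, so the $C_\mathrm{b}^2$-regularity hypotheses on $\bfF$ and $\bfB$ are trivially satisfied.

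\textbf{Closed-form computation via characteristic function.} The operator $\bfA$ leaves invariant each two-dimensional subspace $V_n := \mathrm{span}\{(e_n,0), (0,e_n)\}$, on which $e^{t\bfA}$ acts as a rotation of frequency $\sqrt{\lambda_n}$ with explicit sine/cosine entries. Consequently, the stochastic convolution $\bfX_T^N = \int_0^T e^{(T-s)\bfA}\bfP_N\hat{\bfB}\dWs$ splits into a sum of independent scalar Wiener integrals, and its covariance on each $V_n$ (for $n \leq N$) is explicit. Now select the test function $\varphi(v,w) := \cos\bigl(\langle h_1,v\rangle_{H_0}\bigr)$ with $h_1 := \sum_{n\in\N} (\lambda_n)^{-\delta} e_n \in H_0$ for an exponent $\delta > 1/(2\alpha)$ to be tuned; this $\varphi$ is automatically in $C_\mathrm{b}^2(\bfH_0,\R)$. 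Since $\bfX_T^N$ is centered Gaussian, the characteristic function identity gives, for every $N \in \N \cup \{\infty\}$,
\begin{equation*}
\E\bigl[\varphi(\bfX_T^N)\bigr] = \exp\bigl(-\tfrac{1}{2}\sigma_N^2\bigr),\qquad
\sigma_N^2 := \sum_{n=1}^{N} (\lambda_n)^{-(\gamma+2\delta)} \int_0^T \sin^2\!\bigl((T-s)\sqrt{\lambda_n}\bigr)\ds,
\end{equation*}
so the weak error $\E[\varphi(\bfX_T^\infty)] - \E[\varphi(\bfX_T^N)]$ equals $\exp(-\sigma_\infty^2/2) - \exp(-\sigma_N^2/2)$, which by the mean value theorem is comparable (with a bounded positive factor) to the nonnegative tail $\sigma_\infty^2 - \sigma_N^2 = \sum_{n>N}(\lambda_n)^{-(\gamma+2\delta)}\int_0^T \sin^2((T-s)\sqrt{\lambda_n})\ds$.

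\textbf{Tail estimate and principal obstacle.} The elementary identity $\int_0^T \sin^2((T-s)\sqrt{\lambda_n})\ds = T/2 - \sin(2T\sqrt{\lambda_n})/(4\sqrt{\lambda_n})$ implies that, for $n$ sufficiently large, this integral lies in $[T/4, T/2]$; hence $\sigma_\infty^2 - \sigma_N^2$ is of the same order as $\sum_{n>N}(\lambda_n)^{-(\gamma+2\delta)}$. A standard Riemann-sum estimate for $\lambda_n = n^\alpha$ converts this to order $(\lambda_N)^{-\eta}$ provided $\delta$ is chosen so that $\gamma + 2\delta - 1/\alpha = \eta$, and one may verify that $\alpha,\gamma,\delta$ can be picked consistently (respecting $\alpha\gamma > 2$ and $\alpha\delta > 1/2$) to realise any prescribed $\eta \in (0,\infty)$; for instance $\gamma = \eta/2$, $\alpha$ slightly larger than $4/\eta$, and $\delta = (\eta/2 + 1/\alpha)/2$ works. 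The upper bound in~\eqref{eq:ratebound} then follows from Theorem~1.1 of~\cite{DeNauroisJentzenWelti2015ArXiv} applied to this instance, while the lower bound follows from the explicit tail estimate just established. The principal obstacle is the oscillatory, non-smoothing nature of the wave semigroup: one must be sure the trigonometric oscillations do not cause cancellation in the tail series, which is why the test function is chosen so that the weak error reduces to the \emph{squared} sine integrals appearing above; with that choice, each summand is nonnegative and the lower bound survives the oscillations.
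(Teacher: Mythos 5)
Your construction follows the same basic blueprint as the paper's: take $\xi=0$, $\bfF=0$, and a constant diagonal noise operator, so that $\bfX_T^N$ is an explicit Gaussian stochastic convolution whose coordinates are independent scalar Wiener integrals of $\sin\bigl(\abs{\lambda_n}^{\nicefrac{1}{2}}(T-s)\bigr)$ and $\cos\bigl(\abs{\lambda_n}^{\nicefrac{1}{2}}(T-s)\bigr)$, and the weak error is then bounded from below by the tail of an explicit series (Lemmas~\ref{lem:solution}--\ref{lem:L2norm} and Lemma~\ref{lem:sum} of the paper carry out precisely your ``closed-form computation'' and ``Riemann-sum'' steps). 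Where you genuinely diverge is the test function: the paper takes $\varphi_i(v_1,v_2)=\exp\bigl(-\norm{v_i}^2_{H_{\nicefrac{1}{2}-\nicefrac{i}{2}}}\bigr)$ and invokes Lemma~9.5 in \cite{JentzenKurniawan2015} to convert the difference of second moments into a weak-error lower bound (Proposition~\ref{prop:lower_bound_exp}), whereas you take $\varphi(v,w)=\cos(\langle h_1,v\rangle_{H_0})$ and exploit the exact Gaussian identity $\E[\varphi(\bfX_T^N)]=\exp(-\sigma_N^2/2)$. Your route is more elementary and exact for the lower bound; your remark about non-cancellation of the squared sine integrals is the same point the paper handles through the factor $1+\inf_{x\in[2c^{\nicefrac{1}{2}}T,\infty)}\sin(x)/((-1)^i x)$ --- note that this infimum argument covers \emph{all} $n$, not only ``$n$ sufficiently large'', which you need anyway to obtain a single constant $c$ valid for every $N\in\N$.

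The genuine gap is the upper bound in~\eqref{eq:ratebound}. You assert that it ``follows from Theorem~1.1 of \cite{DeNauroisJentzenWelti2015ArXiv} applied to this instance'', but with your parameters it does not: your noise satisfies $\bfB\in L(H,\bfH_{\gamma'})$ only for $\gamma'\le\gamma=\nicefrac{\eta}{2}$, and the admissible exponent $\beta$ in that theorem is confined to $(\nicefrac{\gamma}{2},\gamma]$ with resulting rate $\gamma-\beta<\nicefrac{\gamma}{2}=\nicefrac{\eta}{4}$, far short of the required rate $\eta$. This is exactly why the paper tunes $\gamma=2\eta$, $\lambda_n=c\+n^{\nicefrac{1}{\eta}}$, $\abs{\mu_h}=\abs{\lambda_h}^{\nicefrac{1}{2}-\eta}$, for which $\beta=\min\{\eta+\varepsilon,2\eta\}$ is admissible and yields precisely $C_\varepsilon(\lambda_N)^{\varepsilon-\eta}$. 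Your example is salvageable, because for your particular $\varphi$ the upper bound follows directly from your own formulas: $\abs{\e^{-a/2}-\e^{-b/2}}\le\tfrac{1}{2}\abs{a-b}$ together with $\sigma_\infty^2-\sigma_N^2\le\tfrac{T}{2}\sum_{n>N}n^{-\alpha(\gamma+2\delta)}\le C\+(\lambda_N)^{-\eta}$; you should prove it that way rather than by citation. Be aware, though, that with this fix your instance no longer witnesses the sharpness of the upper bounds of \cite{DeNauroisJentzenWelti2015ArXiv} (the stated purpose of the theorem), since for your parameters that general upper bound is only of order $(\lambda_N)^{-\nicefrac{\eta}{4}}$ while your test function's error is of order $(\lambda_N)^{-\eta}$; to exhibit an example in which the cited upper bound is essentially attained, re-tune to the paper's parameters.
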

Here and below we denote
for every non-trivial $ \R $-Hilbert space
$ ( V , \langle \cdot, \cdot \rangle_{ V } , \norm{\cdot}_{ V } ) $
and every $ \R $-Hilbert space
$ ( W , \langle \cdot, \cdot \rangle_{ W } , \norm{\cdot}_{ W } ) $
by
$ C^{2}_\mathrm{b}( V , W ) $
the set of all
globally bounded twice Fr\'echet differentiable functions from $ V $ to $ W $
with globally bounded derivatives.
Theorem~1.1 is a direct consequence of
Theorem~1.1 in \cite{DeNauroisJentzenWelti2015ArXiv}
(with
$ \gamma = 2 \eta $,
$ \beta = \min\{ \eta + \varepsilon, 2 \eta \} $,
$ \rho = 0 $
in the notation of Theorem~1.1 in \cite{DeNauroisJentzenWelti2015ArXiv})
and Corollary~\ref{cor:lower_bound_exp} below
(with
$ p = \nicefrac{1}{\eta} $,
$ \delta = \nicefrac{1}{2} - \eta $
in the notation of Corollary~\ref{cor:lower_bound_exp} below).
Inequality~\eqref{eq:ratebound} reveals that the weak convergence rates in Theorem~1.1 in \cite{DeNauroisJentzenWelti2015ArXiv} are essentially sharp.
More details and further lower bounds for weak approximation errors for stochastic wave equations can be found in Corollary~\ref{cor:lower_bound_exp} below.
%
%
%
%%%% lower bounds
\section{Lower bounds for weak errors} \label{sec:lower_bounds}
\subsection{Setting} \label{subsec:setting_lower}
Let $ ( H , \langle \cdot, \cdot \rangle_{ H } , \norm{\cdot}_{ H } ) $ be a separable $ \R $-Hilbert space,
for every set $ A $ let $ \cP( A ) $  be the power set of $ A $,
let $ T \in ( 0 , \infty ) $, let $ ( \Omega , \cF , \P ) $ be a probability space with a normal filtration $ ( \mathbb{F}_t )_{ t \in [ 0 , T ] } $, let $ ( W_t )_{ t \in [ 0 , T ] } $ be an $ \id_H $-cylindrical $ ( \Omega , \cF , \P, ( \mathbb{F}_t )_{ t \in [ 0 , T ] } ) $-Wiener process, let $ \bbH \subseteq H $ be a non-empty orthonormal basis of $ H $, let $ \lambda \colon \bbH \to \R $ be a function with $ \sup_{ h \in \bbH } \lambda_{h} < 0 $, let $ A \colon D(A) \subseteq H \to H $ be the linear operator which satisfies $ D(A) = \bigl\{ v \in H \colon \sum_{ h \in \bbH } \abs{ \lambda_h \langle h, v \rangle_H }^2 < \infty \bigr\} $ and $ \forall \, v \in D(A) \colon A v = \sum_{ h \in \bbH} \lambda_{ h } \langle h,v \rangle_{ H } h $, let $ ( H_r , \langle \cdot , \cdot \rangle_{ H_r } , \norm{\cdot}_{ H_r } )$, $ r \in \R $, be a family of interpolation spaces associated to $ - A $, let $ ( \bfH_r , \langle \cdot , \cdot \rangle_{ \bfH_r } , \norm{\cdot}_{ \bfH_r } )$, $ r \in \R $, be the family of $ \R $-Hilbert spaces which satisfies for all $ r \in \R $ that $ ( \bfH_r , \langle \cdot , \cdot \rangle_{ \bfH_r } , \norm{\cdot}_{ \bfH_r } ) = \bigl( H_{ \nicefrac{r}{2} } \times H_{ \nicefrac{r}{2} - \nicefrac{1}{2} }, \langle \cdot, \cdot \rangle_{ H_{ \nicefrac{r}{2} } \times H_{ \nicefrac{r}{2} - \nicefrac{1}{2} } }, \norm{ \cdot }_{ H_{ \nicefrac{r}{2} } \times H_{ \nicefrac{r}{2} - \nicefrac{1}{2} } } \bigl) $, let $ P_{I} \colon \bigcup_{r\in\R} H_r \to \bigcup_{r\in\R} H_r$, $ I \in \cP( \bbH ) $, and $ \bfP_{I} \colon \bigcup_{ r \in \R } \bfH_r \to \bigcup_{ r \in \R } \bfH_r $, $ I \in \cP( \bbH ) $, be the functions which satisfy for all $ I \in \cP( \bbH ) $, $ r \in \R $, $ u \in H_r $,  $ ( v , w ) \in \bfH_r $ that $ P_{I} (u) =  \sum_{ h \in I }  \langle \abs{\lambda_h}^{-r} h , u \rangle_{ H_r } \abs{\lambda_h}^{-r} h $ and $ \bfP_{I} ( v , w ) = \bigl( P_{I} ( v ) , P_{I} ( w ) \bigr) $, let $ \bfA \colon D ( \bfA ) \subseteq \bfH_0 \to \bfH_0 $ be the linear operator which satisfies $ D ( \bfA ) = \bfH_1 $ and $ \forall \,  (v,w) \in \bfH_1 \colon \bfA( v , w ) = ( w , A v ) $, let $ \mu \colon \bbH \to \R $ be a function which satisfies \smash{$ \sum_{ h \in \bbH } \frac{ \abs{ \mu_h }^2 }{ \abs{ \lambda_h } } < \infty $}\vspace{0.5mm}, let $ \bfB \in \mathrm{HS}( H, \bfH_0 ) $ be the linear operator which satisfies for all $ v \in H $ that $ \bfB v = \bigl( 0 , \sum_{ h \in \bbH } \mu_h \langle h , v \rangle_H h \bigr) $, and let $ \bfX^I = ( X^{ I, 1 }, X^{ I, 2 } ) \colon \Omega \to \bfP_I ( \bfH_0 ) $, $ I \in \cP( \bbH ) $, be random variables which satisfy for all $ I \in \cP( \bbH ) $ that it holds $ \P $-a.s.\ that \smash{$ \bfX^I = \int_0^T \e^{ ( T - s )\bfA } \bfP_I \bfB \dWs $}.
\subsection{Lower bounds for the squared norm}
\begin{lemma} \label{lem:solution}
Assume the setting in Section~\ref{subsec:setting_lower}. Then for all $ I \in \cP( \bbH ) $ it holds $ \P $-a.s.\ that
\begin{equation}
\bfX^I = \bfP_I \bfX^\bbH = \twovector{ X^{ I, 1 } }{ X^{ I, 2 } } = %
\twovector{ \sum_{ h \in I } \Bigl( \frac{ \mu_h }{ \abs{ \lambda_h }^{ \nicefrac{1}{2} } }  \int_0^T \sin \bigl( \abs{ \lambda_h }^{ \nicefrac{1}{2} }(T-s) \bigr) \ud \langle h , W_s \rangle_H \Bigr) h }%
{ \sum_{ h \in I } \Bigl( \frac{ \mu_h }{ \abs{ \lambda_h }^{ \nicefrac{1}{2} } } \int_0^T \cos \bigl( \abs{ \lambda_h }^{ \nicefrac{1}{2} }(T-s) \bigr) \ud \langle h , W_s \rangle_H \Bigr) \abs{ \lambda_h }^{ \nicefrac{1}{2} } h }.
\end{equation}
\end{lemma}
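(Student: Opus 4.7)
The plan is to compute $\mathrm{e}^{t\bfA}$ explicitly on the invariant two-dimensional subspaces associated with the eigenbasis of $A$, then apply this to the integrand $\bfP_I \bfB$ and use the series expansion of the cylindrical Wiener process.

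First, I would fix $h \in \bbH$ and observe that the subspace $V_h := \mathrm{span}_{\R}\{(h,0),(0,h)\} \subseteq \bfH_1$ is invariant under $\bfA$, since $\bfA(h,0)=(0,\lambda_h h)$ and $\bfA(0,h)=(h,0)$. In the ordered basis $\{(h,0),(0,h)\}$ of $V_h$, $\bfA$ is represented by the matrix $M_h = \bigl(\begin{smallmatrix} 0 & 1 \\ \lambda_h & 0\end{smallmatrix}\bigr)$. Writing $\omega_h := \abs{\lambda_h}^{1/2}>0$ (recall $\lambda_h<0$), the identity $M_h^2=-\omega_h^2 I$ immediately gives the closed form
\begin{equation*}
\mathrm{e}^{tM_h} \;=\; \cos(\omega_h t)\+I + \tfrac{\sin(\omega_h t)}{\omega_h}\+M_h
\;=\; \begin{pmatrix} \cos(\omega_h t) & \omega_h^{-1}\sin(\omega_h t) \\ -\omega_h \sin(\omega_h t) & \cos(\omega_h t) \end{pmatrix}.
\end{equation*}

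Next, I would note that $\bfB h = (0,\mu_h h) \in V_h$ by definition of $\bfB$, and that $\bfP_I \bfB h = \bbone_I(h)\+(0,\mu_h h)$ since $\bfP_I$ is the orthogonal projection (in every interpolation space $\bfH_r$) onto $\overline{\mathrm{span}}\{(h,0),(0,h) : h \in I\}$. Combining this with the previous step yields, for every $h \in \bbH$ and every $s \in [0,T]$,
\begin{equation*}
\mathrm{e}^{(T-s)\bfA}\+\bfP_I \bfB h \;=\; \bbone_I(h)\+\Bigl(\tfrac{\mu_h}{\omega_h}\sin(\omega_h(T-s))\+h,\ \mu_h \cos(\omega_h(T-s))\+h\Bigr).
\end{equation*}
The assumption $\sum_{h\in\bbH}\abs{\mu_h}^2/\abs{\lambda_h}<\infty$ and the boundedness of $\sin,\cos$ ensure that $[0,T]\ni s \mapsto \mathrm{e}^{(T-s)\bfA}\bfP_I\bfB \in \mathrm{HS}(H,\bfH_0)$ is square-integrable, so the stochastic integral defining $\bfX^I$ exists and equals the $\mathcal{L}^2(\P;\bfH_0)$-limit of its partial sums over finite subsets of $I$.

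Now, since $(\langle h,W_{\cdot}\rangle_H)_{h\in\bbH}$ is a family of independent standard $(\mathbb{F}_t)$-Brownian motions and since the projections $\bfP_I$ commute with $\mathrm{e}^{t\bfA}$ on the orthogonal eigenspaces, I would insert the expansion of $dW_s$ into the stochastic integral and interchange the (convergent) sum with the integral via the It\^o isometry, obtaining
\begin{equation*}
\bfX^I \;=\; \sum_{h \in I}\int_0^T \mathrm{e}^{(T-s)\bfA}\+\bfP_{\{h\}}\bfB\+h \ud \langle h,W_s\rangle_H
\end{equation*}
in $\mathcal{L}^2(\P;\bfH_0)$. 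Substituting the closed form from the previous step and rewriting the second component using the factorisation $\mu_h = \abs{\lambda_h}^{-1/2}\cdot \abs{\lambda_h}^{1/2}\cdot \mu_h$ (in order to expose the $H_{-1/2}$-basis vectors $\abs{\lambda_h}^{1/2}h$) yields the desired formula for $\bfX^I$. The identity $\bfX^I = \bfP_I \bfX^{\bbH}$ is then immediate from the fact that $\bfP_I$ kills exactly the terms with $h \notin I$ in the series for $\bfX^{\bbH}$.

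The only delicate step is justifying the interchange of the infinite sum over $\bbH$ with the stochastic integral; this is standard and reduces, by the It\^o isometry and mutual orthogonality of the increments $d\langle h,W_s\rangle_H$ for distinct $h$, to the finite Hilbert-Schmidt norm of the integrand established above. All other steps are essentially linear algebra on $V_h$ and formal manipulation of the series.
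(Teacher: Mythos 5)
Your proposal is correct and follows essentially the same route as the paper: expand the stochastic integral into a sum of scalar It\^o integrals over the eigenmodes, apply the explicit sine/cosine action of the wave group $\e^{t\bfA}$ on each mode, and commute $\bfP_I$ with the integral. The only difference is that the paper outsources the series expansion and the commutation step to Lemmas~2.5--2.7 of \cite{DeNauroisJentzenWelti2015ArXiv} and uses the functional-calculus form of $\e^{t\bfA}$ directly, whereas you re-derive the semigroup action via the $2\times 2$ matrix exponential on the invariant subspaces and sketch the interchange argument yourself.
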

\begin{proof}[Proof of Lemma~\ref{lem:solution}]
Lemma~2.5 in~\cite{DeNauroisJentzenWelti2015ArXiv}
%Lemma~\ref{lem:semigroup}
proves that it holds $ \P $-a.s.\ that
\begin{equation} \label{eq:solution,1}
\begin{split}
 \bfX^{ \bbH } & = \int_0^T \e^{ (T-s)\bfA } \bfB \dWs = \sum_{ h \in \bbH } \int_0^T \e^{ (T-s)\bfA } \bfB h \ud \langle h , W_s \rangle_H \\
& = \sum_{ h \in \bbH } \twovector{ \mu_h \int_0^T ( -A )^{ - \nicefrac{1}{2} } \sin \bigl( ( -A )^{ \nicefrac{1}{2} }(T-s) \bigr) h \ud \langle h , W_s \rangle_H }%
{ \mu_h \int_0^T \cos \bigl( ( -A )^{ \nicefrac{1}{2} }(T-s) \bigr) h \ud \langle h , W_s \rangle_H } \\
& = \sum_{ h \in \bbH } \twovector{ \frac{ \mu_h }{ \abs{ \lambda_h }^{ \nicefrac{1}{2} } } \int_0^T \sin \bigl( \abs{ \lambda_h }^{ \nicefrac{1}{2} }(T-s) \bigr) h \ud \langle h , W_s \rangle_H }%
{ \mu_h \int_0^T \cos \bigl( \abs{ \lambda_h }^{ \nicefrac{1}{2} }(T-s) \bigr) h \ud \langle h , W_s \rangle_H } \\
& = \twovector{ \sum_{ h \in \bbH } \Bigl( \frac{ \mu_h }{ \abs{ \lambda_h }^{ \nicefrac{1}{2} } }  \int_0^T \sin \bigl( \abs{ \lambda_h }^{ \nicefrac{1}{2} }(T-s) \bigr) \ud \langle h , W_s \rangle_H \Bigr) h }%
{ \sum_{ h \in \bbH } \Bigl( \frac{ \mu_h }{ \abs{ \lambda_h }^{ \nicefrac{1}{2} } } \int_0^T \cos \bigl( \abs{ \lambda_h }^{ \nicefrac{1}{2} }(T-s) \bigr) \ud \langle h , W_s \rangle_H \Bigr) \abs{ \lambda_h }^{ \nicefrac{1}{2} } h }.
\end{split}
\end{equation}
Furthermore,
Lemma~2.7 in~\cite{DeNauroisJentzenWelti2015ArXiv}
%Lemma~\ref{lem:commute}
shows for all $ I \in \cP( \bbH ) $ that it holds $ \P $-a.s.\ that
\begin{equation}
\bfP_I \bfX^{ \bbH } = \int_0^T \bfP_I \e^{ (T-s) \bfA }  \bfB \dWs = \int_0^T \e^{ (T-s)\bfA } \bfP_I \bfB \dWs = \bfX^I.
\end{equation} 
This and \eqref{eq:solution,1} complete the proof of Lemma \ref{lem:solution}.
\end{proof}
\begin{lemma} \label{lem:gaussian}
Assume the setting in Section~\ref{subsec:setting_lower} and let $ I \in \cP( \bbH ) $. Then
\begin{enumerate}[(i)]
\item \label{lem:gaussian:i1}
it holds that $ \langle h, X^{ I, 1 } \rangle_{ H_0 } $,  $ h \in \bbH $, is a family of independent centred Gaussian random variables,
\item \label{lem:gaussian:i2}
it holds that \smash{$ \bigl\langle \abs{ \lambda_h }^{ \nicefrac{1}{2} } h, X^{ I, 2 } \bigr\rangle_{ H_{ - \nicefrac{1}{2} } } $}, $ h \in \bbH $, is a family of independent centred Gaussian random variables, and
\item \label{lem:gaussian:i3}
it holds for all $ h \in \bbH $ that
\begin{align}
\Var \bigl( \langle h, X^{ I, 1 } \rangle_{ H_0 } \bigr) & = \bbone_I (h) \frac{ \abs{ \mu_h }^2}{ \abs{ \lambda_h } } \frac{1}{2} \biggl( T - \frac{ \sin \bigl( 2 \abs{ \lambda_h }^{ \nicefrac{1}{2} } T \bigr) }{ 2 \abs{ \lambda_h }^{ \nicefrac{1}{2} } } \biggr), \\
\Var \Bigl( \bigl\langle \abs{ \lambda_h }^{ \nicefrac{1}{2} } h, X^{ I, 2 } \bigr\rangle_{ H_{ - \nicefrac{1}{2} } } \Bigr) & = \bbone_I (h) \frac{ \abs{ \mu_h }^2}{ \abs{ \lambda_h } } \frac{1}{2} \biggl( T + \frac{ \sin \bigl( 2 \abs{ \lambda_h }^{ \nicefrac{1}{2} } T \bigr) }{ 2 \abs{ \lambda_h }^{ \nicefrac{1}{2} } } \biggr), \\
\Cov \Bigl( \langle h, X^{ I, 1 } \rangle_{ H_0 }, \bigl\langle \abs{ \lambda_h }^{ \nicefrac{1}{2} } h, X^{ I, 2 } \bigr\rangle_{ H_{ - \nicefrac{1}{2} } } \Bigr) & = \bbone_I (h) \frac{ \abs{ \mu_h }^2}{ \abs{ \lambda_h } } \biggl( \frac{ 1 - \cos \bigl( 2 \abs{ \lambda_h }^{ \nicefrac{1}{2} } T \bigr) }{ 4 \abs{ \lambda_h }^{ \nicefrac{1}{2} } } \biggr).
\end{align}
\end{enumerate}
\end{lemma}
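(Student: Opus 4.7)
The plan is to exploit the explicit representation from Lemma~\ref{lem:solution} together with the orthonormal basis structure of the interpolation spaces. Since $(-A)$ has eigenvectors $h \in \bbH$ with eigenvalues $\abs{\lambda_h}$, the family $( \abs{\lambda_h}^{-r} h )_{h \in \bbH}$ forms an orthonormal basis of $H_r$ for every $r \in \R$ (this is already built into the definition of $P_I$). In particular, $(h)_{h \in \bbH}$ is an orthonormal basis of $H_0$ and $( \abs{\lambda_h}^{\nicefrac{1}{2}} h )_{h \in \bbH}$ is an orthonormal basis of $H_{-\nicefrac{1}{2}}$. Reading off the coefficients of $X^{I,1}$ and $X^{I,2}$ from the two components in Lemma~\ref{lem:solution}, I would therefore identify, for every $h \in \bbH$,
\begin{equation*}
\langle h, X^{I,1} \rangle_{ H_0 } = \bbone_I (h) \, \tfrac{ \mu_h }{ \abs{ \lambda_h }^{ \nicefrac{1}{2} } } \int_0^T \sin \bigl( \abs{\lambda_h}^{ \nicefrac{1}{2} } ( T - s ) \bigr) \ud \langle h, W_s \rangle_H
\end{equation*}
and the analogous cosine expression for $\bigl\langle \abs{\lambda_h}^{\nicefrac{1}{2}} h, X^{I,2} \bigr\rangle_{ H_{-\nicefrac{1}{2}} }$.

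Next, I would invoke two standard facts about the cylindrical Wiener process $W$. First, the scalar processes $( \langle h, W_\cdot \rangle_H )_{h \in \bbH}$ are mutually independent real-valued standard Brownian motions, because $\bbH$ is orthonormal. Second, each of the above expressions is a Wiener integral of a deterministic $L^2([0,T])$ function against one such Brownian motion, hence a centred Gaussian random variable. Combined, this immediately yields both~\eqref{lem:gaussian:i1} and~\eqref{lem:gaussian:i2}: independence across distinct $h \in \bbH$ reduces to independence of the driving Brownian motions, and centred Gaussianity is inherited from the Wiener integral.

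For part~\eqref{lem:gaussian:i3}, the Itô isometry converts the three second moments into deterministic integrals of $\sin^2$, $\cos^2$, and $\sin \cdot \cos$ on $[0,T]$. Using the half-angle identities $\sin^2 x = \tfrac{1}{2}(1 - \cos 2x)$, $\cos^2 x = \tfrac{1}{2}(1 + \cos 2x)$, and $2 \sin x \cos x = \sin 2x$, together with the substitution $u = T - s$, these integrals evaluate in closed form and reproduce precisely the right-hand sides claimed in the lemma, with the factor $\bbone_I(h)$ simply recording whether $h$ contributes to $\bfX^I$ or not. This last step is purely computational.

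I do not foresee a genuine obstacle; the only point to be careful about is the algebraic bookkeeping with the interpolation-space pairings, in particular recognising that the factor $\abs{\lambda_h}^{\nicefrac{1}{2}}$ appearing both in the basis element $\abs{\lambda_h}^{\nicefrac{1}{2}} h$ and inside $X^{I,2}$ cancels precisely to produce the same prefactor $\mu_h / \abs{\lambda_h}^{\nicefrac{1}{2}}$ as in the first component. Once this cancellation is made explicit, the Gaussianity, independence, and the three moment formulas follow from the Wiener integral framework and elementary trigonometric integration.
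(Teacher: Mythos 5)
Your proposal is correct and follows essentially the same route as the paper's proof: read off the coefficients of $X^{I,1}$ and $X^{I,2}$ from the explicit representation in Lemma~\ref{lem:solution}, deduce~\eqref{lem:gaussian:i1} and~\eqref{lem:gaussian:i2} from the Gaussianity and independence of the scalar Wiener integrals, and obtain~\eqref{lem:gaussian:i3} via It\^{o}'s isometry and elementary trigonometric integration. The only difference is that you spell out the independence and Gaussianity argument in slightly more detail than the paper, which simply cites Lemma~\ref{lem:solution} for those two items.
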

\begin{proof}[Proof of Lemma~\ref{lem:gaussian}]
Observe that Lemma~\ref{lem:solution} implies \eqref{lem:gaussian:i1} and \eqref{lem:gaussian:i2}. It thus remains to prove \eqref{lem:gaussian:i3}. Lemma~\ref{lem:solution} implies for all $ h \in \bbH $ that it holds $ \P $-a.s.\ that
\begin{align}
\langle h, X^{ I, 1 } \rangle_{ H_0 } & = \bbone_I (h) \frac{ \mu_h }{ \abs{ \lambda_h }^{ \nicefrac{1}{2} } } \int_0^T \sin \bigl( \abs{ \lambda_h }^{ \nicefrac{1}{2} }(T-s) \bigr) \ud \langle h , W_s \rangle_H, \\
\bigl\langle \abs{ \lambda_h }^{ \nicefrac{1}{2} } h, X^{ I, 2 } \bigr\rangle_{ H_{ - \nicefrac{1}{2} } } & = \bbone_I (h) \frac{ \mu_h }{ \abs{ \lambda_h }^{ \nicefrac{1}{2} } } \int_0^T \cos \bigl( \abs{ \lambda_h }^{ \nicefrac{1}{2} }(T-s) \bigr) \ud \langle h , W_s \rangle_H.
\end{align}
It\^{o}'s isometry hence shows for all $ h \in \bbH $ that
\begin{align}
\begin{split}
\Var \bigl( \langle h, X^{ I, 1 } \rangle_{ H_0 } \bigr) & = \E \bigl[ \abs{ \langle h, X^{ I, 1 } \rangle_{ H_0 } }^2 \bigr] = \bbone_I (h) \frac{ \abs{ \mu_h }^2 }{ \abs{ \lambda_h } } \int_0^T \abs[\big]{ \sin \bigl( \abs{ \lambda_h }^{ \nicefrac{1}{2} }(T-s) \bigr) }^2 \ds \\
& = \bbone_I (h) \frac{ \abs{ \mu_h }^2}{ \abs{ \lambda_h } } \frac{1}{2} \biggl( T - \frac{ \sin \bigl( 2 \abs{ \lambda_h }^{ \nicefrac{1}{2} } T \bigr) }{ 2 \abs{ \lambda_h }^{ \nicefrac{1}{2} } } \biggr),
\end{split} \\
\begin{split}
\Var \Bigl( \bigl\langle \abs{ \lambda_h }^{ \nicefrac{1}{2} } h, X^{ I, 2 } \bigr\rangle_{ H_{ - \nicefrac{1}{2} } } \Bigr) & = \E \Bigl[ \abs[\Big]{ \bigl\langle \abs{ \lambda_h }^{ \nicefrac{1}{2} } h, X^{ I, 2 } \bigr\rangle_{ H_{ - \nicefrac{1}{2} } } }^2 \Bigr] \\
& = \bbone_I (h) \frac{ \abs{ \mu_h }^2 }{ \abs{ \lambda_h } } \int_0^T \abs[\big]{ \cos \bigl( \abs{ \lambda_h }^{ \nicefrac{1}{2} }(T-s) \bigr) }^2 \ds \\
& = \bbone_I (h) \frac{ \abs{ \mu_h }^2}{ \abs{ \lambda_h } } \frac{1}{2} \biggl( T + \frac{ \sin \bigl( 2 \abs{ \lambda_h }^{ \nicefrac{1}{2} } T \bigr) }{ 2 \abs{ \lambda_h }^{ \nicefrac{1}{2} } } \biggr).
\end{split}
\end{align}
Furthermore, observe for all $ h \in \bbH $ that
\begin{equation}
\begin{split}
& \Cov \Bigl( \langle h, X^{ I, 1 } \rangle_{ H_0 }, \bigl\langle \abs{ \lambda_h }^{ \nicefrac{1}{2} } h, X^{ I, 2 } \bigr\rangle_{ H_{ - \nicefrac{1}{2} } } \Bigr) = \E \Bigl[ \langle h, X^{ I, 1 } \rangle_{ H_0 } \bigl\langle \abs{ \lambda_h }^{ \nicefrac{1}{2} } h, X^{ I, 2 } \bigr\rangle_{ H_{ - \nicefrac{1}{2} } } \Bigr] \\
& = \bbone_I (h) \frac{ \abs{ \mu_h }^2 }{ \abs{ \lambda_h } } \int_0^T \sin \bigl( \abs{ \lambda_h }^{ \nicefrac{1}{2} }(T-s) \bigr) \cos \bigl( \abs{ \lambda_h }^{ \nicefrac{1}{2} }(T-s) \bigr) \ds \\
& = \bbone_I (h) \frac{ \abs{ \mu_h }^2}{ \abs{ \lambda_h } } \biggl( \frac{ \abs[\big]{ \sin \bigl( \abs{ \lambda_h }^{ \nicefrac{1}{2} } T \bigr) }^2 }{ 2 \abs{ \lambda_h }^{ \nicefrac{1}{2} } } \biggr) \\
& = \bbone_I (h) \frac{ \abs{ \mu_h }^2}{ \abs{ \lambda_h } } \biggl( \frac{ 1 - \cos \bigl( 2 \abs{ \lambda_h }^{ \nicefrac{1}{2} } T \bigr) }{ 4 \abs{ \lambda_h }^{ \nicefrac{1}{2} } } \biggr).
\end{split}
\end{equation}
The proof of Lemma~\ref{lem:gaussian} is thus completed.
\end{proof}
\begin{cor} \label{cor:covop}
Assume the setting in Section~\ref{subsec:setting_lower} and let $ I \in \cP( \bbH ) $. Then it holds for all $ (v,w) \in \bfP_I ( \bfH_0 ) $ that
\begin{align*}
\CovOp ( \bfX^I ) ( v, w ) & = \frac{1}{2} \frac{ \abs{ \mu_h }^2}{ \abs{ \lambda_h } } \sum_{ h \in I } \biggl[ \biggl( T - \frac{ \sin \bigl( 2 \abs{ \lambda_h }^{ \nicefrac{1}{2} } T \bigr) }{ 2 \abs{ \lambda_h }^{ \nicefrac{1}{2} } } \biggr) \langle h, v \rangle_{ H_0 } \twovector{ h }{ 0 } \\
& \hphantom{ = \frac{1}{2} \frac{ \abs{ \mu_h }^2}{ \abs{ \lambda_h } } \sum_{ h \in \bbH } \biggl[} + \biggl( \frac{ 1 - \cos \bigl( 2 \abs{ \lambda_h }^{ \nicefrac{1}{2} } T \bigr) }{ 2 \abs{ \lambda_h }^{ \nicefrac{1}{2} } } \biggr) \bigl\langle \abs{ \lambda_h }^{ \nicefrac{1}{2} } h, w \bigr\rangle_{ H_{ - \nicefrac{1}{2} } } \twovector{ h }{ 0 } \yesnumber \\
& \hphantom{ = \frac{1}{2} \frac{ \abs{ \mu_h }^2}{ \abs{ \lambda_h } } \sum_{ h \in \bbH } \biggl[} + \biggl( \frac{ 1 - \cos \bigl( 2 \abs{ \lambda_h }^{ \nicefrac{1}{2} } T \bigr) }{ 2 \abs{ \lambda_h }^{ \nicefrac{1}{2} } } \biggr) \langle h, v \rangle_{ H_0 }  \twovector{ 0 }{ \abs{ \lambda_h }^{ \nicefrac{1}{2} } h } \\
& \hphantom{ = \frac{1}{2} \frac{ \abs{ \mu_h }^2}{ \abs{ \lambda_h } } \sum_{ h \in \bbH } \biggl[} + \biggl( T + \frac{ \sin \bigl( 2 \abs{ \lambda_h }^{ \nicefrac{1}{2} } T \bigr) }{ 2 \abs{ \lambda_h }^{ \nicefrac{1}{2} } } \biggr) \bigl\langle \abs{ \lambda_h }^{ \nicefrac{1}{2} } h, w \bigr\rangle_{ H_{ - \nicefrac{1}{2} } } \twovector{ 0 }{ \abs{ \lambda_h }^{ \nicefrac{1}{2} } h } \biggr]
\in \bfP_I ( \bfH_0 ).
\end{align*}
\end{cor}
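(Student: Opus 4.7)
The plan is to derive the formula directly from the defining property of the covariance operator, combined with the explicit variance and covariance values from Lemma~\ref{lem:gaussian}. First, by the definitions of the interpolation spaces and of $P_I$ in Section~\ref{subsec:setting_lower}, the family $\{ h : h \in I \}$ is an orthonormal basis of $P_I(H_0)$ and $\{ \abs{\lambda_h}^{\nicefrac{1}{2}} h : h \in I \}$ is an orthonormal basis of $P_I(H_{-\nicefrac{1}{2}})$. Hence every $(v,w) \in \bfP_I(\bfH_0)$ admits the expansions $v = \sum_{h \in I} \langle h, v\rangle_{H_0}\, h$ and $w = \sum_{h \in I} \bigl\langle \abs{\lambda_h}^{\nicefrac{1}{2}} h, w\bigr\rangle_{H_{-\nicefrac{1}{2}}} \abs{\lambda_h}^{\nicefrac{1}{2}} h$, and analogously for any test element $(v',w') \in \bfP_I(\bfH_0)$.

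Next I would invoke the characterisation $\langle \CovOp(\bfX^I)(v,w), (v',w')\rangle_{\bfH_0} = \Cov\bigl(\langle \bfX^I, (v,w)\rangle_{\bfH_0}, \langle \bfX^I, (v',w')\rangle_{\bfH_0}\bigr)$, substitute the above decompositions on both sides, and exploit bilinearity of $\Cov$. This produces a double sum over $\bbH \times \bbH$ of four types of covariance terms (pairing the two components of $\bfX^I$ with one another in all combinations). Parts~\eqref{lem:gaussian:i1} and~\eqref{lem:gaussian:i2} of Lemma~\ref{lem:gaussian}, together with the independence across distinct $h \in \bbH$ of the one-dimensional Wiener integrals $\langle h, W\rangle_H$, force every off-diagonal term with $h \neq h'$ to vanish. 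What remains is a single sum over $h \in I$ whose coefficients are precisely the three explicit quantities $\Var(\langle h, X^{I,1}\rangle_{H_0})$, $\Var\bigl(\bigl\langle \abs{\lambda_h}^{\nicefrac{1}{2}} h, X^{I,2}\bigr\rangle_{H_{-\nicefrac{1}{2}}}\bigr)$, and the cross covariance provided by Lemma~\ref{lem:gaussian}\eqref{lem:gaussian:i3}.

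Finally, reading off the Riesz representative of the resulting linear functional of $(v',w')$ yields the four-line expression in the corollary: the two variances give the coefficients along the diagonal directions $(h,0)$ and $(0, \abs{\lambda_h}^{\nicefrac{1}{2}} h)$, while the cross covariance contributes once to each coordinate because in the double sum it pairs $v$ with $w'$ in one term and $w$ with $v'$ in the symmetric term. The membership $\CovOp(\bfX^I)(v,w) \in \bfP_I(\bfH_0)$ is automatic from restricting the sum to $h \in I$. The entire argument is essentially bookkeeping; the only step requiring genuine care is identifying the correct dual orthonormal basis $\bigl\{ \abs{\lambda_h}^{\nicefrac{1}{2}} h \bigr\}_{h \in I}$ for the negative-order space $H_{-\nicefrac{1}{2}}$, so that the sums collapse into scalar coefficients without producing spurious factors of $\abs{\lambda_h}$ from the weighted $H_{-\nicefrac{1}{2}}$ inner product.
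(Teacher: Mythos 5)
Your proposal is correct and follows essentially the same route as the paper: both start from the identity $\langle x_1, \CovOp(\bfX^I) x_2 \rangle_{\bfH_0} = \Cov\bigl(\langle x_1, \bfX^I \rangle_{\bfH_0}, \langle x_2, \bfX^I \rangle_{\bfH_0}\bigr)$, expand in the orthonormal bases $\{h\}_{h \in \bbH}$ of $H_0$ and $\{\abs{\lambda_h}^{\nicefrac{1}{2}} h\}_{h \in \bbH}$ of $H_{-\nicefrac{1}{2}}$, kill the off-diagonal terms via the independence statements in Lemma~\ref{lem:gaussian}, and read off the Riesz representative using the explicit variances and cross covariance from Lemma~\ref{lem:gaussian}~(iii). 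The paper organises the expansion through the explicit series representation of $\bfX^I$ from Lemma~\ref{lem:solution} rather than through the expansion of the test vectors, but this is the same bookkeeping.
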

\begin{proof}[Proof of Corollary~\ref{cor:covop}]
Lemma~\ref{lem:gaussian} and Lemma~\ref{lem:solution} prove for all $ x_1 = ( v_1, w_1) $, $ x_2 = ( v_2, w_2 ) \in \bfP_I ( \bfH_0 ) $ that
\begin{equation}
\begin{split}
&  \langle x_1, \CovOp ( \bfX^I ) x_2 \rangle_{ \bfH_0 } = \Cov \bigl( \langle x_1, \bfX^I \rangle_{ \bfH_0 }, \langle x_2, \bfX^I \rangle_{ \bfH_0 } \bigr) = \E \bigl[ \langle x_1, \bfX^I \rangle_{ \bfH_0 } \langle x_2, \bfX^I \rangle_{ \bfH_0 } \bigr] \\
& = \E \bigl[ \bigl( \langle v_1, X^{ I, 1 } \rangle_{ H_0 } + \langle w_1, X^{ I, 2 } \rangle_{ H_{ - \nicefrac{1}{2} } } \bigl) \bigl( \langle v_2, X^{ I, 1 } \rangle_{ H_0 } + \langle w_2, X^{ I, 2 } \rangle_{ H_{ - \nicefrac{1}{2} } } \bigl) \bigr] \\
& = \sum_{ h \in \bbH } \Var \bigl( \langle h, X^{ I, 1 } \rangle_{ H_0 } \bigr) \langle h, v_1 \rangle_{ H_0 } \langle h, v_2 \rangle_{ H_0 } \\
& \quad + \sum_{ h \in \bbH } \Cov \Bigl( \langle h, X^{ I, 1 } \rangle_{ H_0 }, \bigl\langle \abs{ \lambda_h }^{ \nicefrac{1}{2} } h, X^{ I, 2 } \bigr\rangle_{ H_{ - \nicefrac{1}{2} } } \Bigr) \langle h, v_1 \rangle_{ H_0 } \bigl\langle \abs{ \lambda_h }^{ \nicefrac{1}{2} } h, w_2 \bigr\rangle_{ H_{ - \nicefrac{1}{2} } } \\
& \quad + \sum_{ h \in \bbH } \Cov \Bigl( \langle h, X^{ I, 1 } \rangle_{ H_0 }, \bigl\langle \abs{ \lambda_h }^{ \nicefrac{1}{2} } h, X^{ I, 2 } \bigr\rangle_{ H_{ - \nicefrac{1}{2} } } \Bigr) \langle h, v_2 \rangle_{ H_0 } \bigl\langle \abs{ \lambda_h }^{ \nicefrac{1}{2} } h, w_1 \bigr\rangle_{ H_{ - \nicefrac{1}{2} } } \\
& \quad + \sum_{ h \in \bbH } \Var \Bigl( \bigl\langle \abs{ \lambda_h }^{ \nicefrac{1}{2} } h, X^{ I, 2 } \bigr\rangle_{ H_{ - \nicefrac{1}{2} } } \Bigr) \bigl\langle \abs{ \lambda_h }^{ \nicefrac{1}{2} } h, w_1 \bigr\rangle_{ H_{ - \nicefrac{1}{2} } } \bigl\langle \abs{ \lambda_h }^{ \nicefrac{1}{2} } h, w_2 \bigr\rangle_{ H_{ - \nicefrac{1}{2} } } \\
& = \biggl\langle v_1, \sum_{ h \in \bbH } \Bigl[ \Var \bigl( \langle h, X^{ I, 1 } \rangle_{ H_0 } \bigr) \langle h, v_2 \rangle_{ H_0 } \\
& \qquad\qquad\quad\ + \Cov \Bigl( \langle h, X^{ I, 1 } \rangle_{ H_0 }, \bigl\langle \abs{ \lambda_h }^{ \nicefrac{1}{2} } h, X^{ I, 2 } \bigr\rangle_{ H_{ - \nicefrac{1}{2} } } \Bigr) \bigl\langle \abs{ \lambda_h }^{ \nicefrac{1}{2} } h, w_2 \bigr\rangle_{ H_{ - \nicefrac{1}{2} } } \Bigr] h \biggl\rangle_{ H_0 } \\
& \quad + \biggl\langle w_1, \sum_{ h \in \bbH } \Bigl[ \Cov \Bigl( \langle h, X^{ I, 1 } \rangle_{ H_0 }, \bigl\langle \abs{ \lambda_h }^{ \nicefrac{1}{2} } h, X^{ I, 2 } \bigr\rangle_{ H_{ - \nicefrac{1}{2} } } \Bigr) \langle h, v_2 \rangle_{ H_0 } \\
& \qquad\qquad\qquad \ + \Var \Bigl( \bigl\langle \abs{ \lambda_h }^{ \nicefrac{1}{2} } h, X^{ I, 2 } \bigr\rangle_{ H_{ - \nicefrac{1}{2} } } \Bigr) \bigl\langle \abs{ \lambda_h }^{ \nicefrac{1}{2} } h, w_2 \bigr\rangle_{ H_{ - \nicefrac{1}{2} } } \Bigr] \abs{ \lambda_h }^{ \nicefrac{1}{2} } h \biggr\rangle_{ H_{ - \nicefrac{1}{2} } } \\
& = \biggl\langle x_1, \sum_{ h \in \bbH } \Bigl[ \Var \bigl( \langle h, X^{ I, 1 } \rangle_{ H_0 } \bigr) \langle h, v_2 \rangle_{ H_0 } \\
& \qquad\qquad\quad\ + \Cov \Bigl( \langle h, X^{ I, 1 } \rangle_{ H_0 }, \bigl\langle \abs{ \lambda_h }^{ \nicefrac{1}{2} } h, X^{ I, 2 } \bigr\rangle_{ H_{ - \nicefrac{1}{2} } } \Bigr) \bigl\langle \abs{ \lambda_h }^{ \nicefrac{1}{2} } h, w_2 \bigr\rangle_{ H_{ - \nicefrac{1}{2} } } \Bigr] \twovector{ h }{ 0 } \biggr\rangle_{ \bfH_0 } \\
& \quad + \biggl\langle x_1, \sum_{ h \in \bbH } \Bigl[ \Cov \Bigl( \langle h, X^{ I, 1 } \rangle_{ H_0 }, \bigl\langle \abs{ \lambda_h }^{ \nicefrac{1}{2} } h, X^{ I, 2 } \bigr\rangle_{ H_{ - \nicefrac{1}{2} } } \Bigr) \langle h, v_2 \rangle_{ H_0 } \\
& \qquad\qquad\qquad \ + \Var \Bigl( \bigl\langle \abs{ \lambda_h }^{ \nicefrac{1}{2} } h, X^{ I, 2 } \bigr\rangle_{ H_{ - \nicefrac{1}{2} } } \Bigr) \bigl\langle \abs{ \lambda_h }^{ \nicefrac{1}{2} } h, w_2 \bigr\rangle_{ H_{ - \nicefrac{1}{2} } } \Bigr] \twovector{ 0 }{ \abs{ \lambda_h }^{ \nicefrac{1}{2} } h } \biggr\rangle_{ \bfH_0 }.
\end{split}
\end{equation}
This and again Lemma~\ref{lem:gaussian} complete the proof of Corollary~\ref{cor:covop}.
\end{proof}
\begin{lemma} \label{lem:L2norm}
Assume the setting in Section~\ref{subsec:setting_lower} and let $ I \in \cP( \bbH ) $. Then it holds for all $ i \in \{ 1, 2 \} $ that $ \bfX^I \in \mathcal{L}^2( \P; \bfH_0 ) $ and
\begin{align}
\E \bigl[ \norm{ \bfX^I }_{ \bfH_0 }^2 \bigr] & = T \sum_{ h \in I } \frac{ \abs{ \mu_h }^2 }{ \abs{ \lambda_h } } < \infty, \\
\E \Bigl[ \norm{ X^{ I, i } }_{ H_{ \nicefrac{1}{2} - \nicefrac{i}{2} } }^2 \Bigr] & = \frac{1}{2} \sum_{ h \in I } \frac{ \abs{ \mu_h }^2 }{ \abs{ \lambda_h } } \biggl( T + \frac{ \sin \bigl( 2 \abs{ \lambda_h }^{ \nicefrac{1}{2} } T \bigr) }{ (-1)^i \+ 2 \abs{ \lambda_h }^{ \nicefrac{1}{2} } } \biggr) < \infty.
\end{align}
\end{lemma}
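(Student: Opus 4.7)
The plan is to expand both squared norms via Parseval's identity with respect to the natural orthonormal bases of $H_0$ and $H_{-\nicefrac{1}{2}}$ induced by $\bbH$ and the eigenvalues $(\lambda_h)_{h \in \bbH}$, and then to substitute the variance formulas from Lemma~\ref{lem:gaussian}. Concretely, $(h)_{h \in \bbH}$ is an orthonormal basis of $H_0$, while $(\abs{\lambda_h}^{\nicefrac{1}{2}} h)_{h \in \bbH}$ is an orthonormal basis of $H_{-\nicefrac{1}{2}}$ (this is consistent with the definition of $P_I$ in Section~\ref{subsec:setting_lower}), so Parseval gives
\begin{equation*}
\norm{ X^{I,1} }_{H_0}^2 = \sum_{h \in \bbH} \abs[\big]{ \langle h, X^{I,1} \rangle_{H_0} }^2, \qquad \norm{ X^{I,2} }_{H_{-\nicefrac{1}{2}}}^2 = \sum_{h \in \bbH} \abs[\big]{ \bigl\langle \abs{\lambda_h}^{\nicefrac{1}{2}} h, X^{I,2} \bigr\rangle_{H_{-\nicefrac{1}{2}}} }^2.
\end{equation*}
Taking expectations and exchanging sum and expectation (Tonelli's theorem, since the summands are non-negative) reduces each expected squared norm to a sum of variances of the one-dimensional centred Gaussian random variables already analysed in Lemma~\ref{lem:gaussian}. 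Substituting the explicit variance formulas from Lemma~\ref{lem:gaussian} then yields the claimed expressions for $\E\bigl[ \norm{X^{I,i}}_{H_{\nicefrac{1}{2} - \nicefrac{i}{2}}}^2 \bigr]$, $i \in \{1,2\}$.

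For the first identity I would then exploit the product Hilbert space structure $\bfH_0 = H_0 \times H_{-\nicefrac{1}{2}}$, which gives $\norm{\bfX^I}_{\bfH_0}^2 = \norm{X^{I,1}}_{H_0}^2 + \norm{X^{I,2}}_{H_{-\nicefrac{1}{2}}}^2$. Adding the two explicit expressions, the contributions proportional to $\sin(2\abs{\lambda_h}^{\nicefrac{1}{2}} T)/(2\abs{\lambda_h}^{\nicefrac{1}{2}})$ appear with opposite signs in the two components and cancel, leaving exactly $T \cdot \abs{\mu_h}^2/\abs{\lambda_h}$ for each $h \in I$. Summing over $I$ yields $T \sum_{h \in I} \abs{\mu_h}^2/\abs{\lambda_h}$, which is the first claim.

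Finally, finiteness of all three quantities follows from the standing hypothesis $\sum_{h \in \bbH} \abs{\mu_h}^2/\abs{\lambda_h} < \infty$ together with the elementary bound $\abs{\sin(2\abs{\lambda_h}^{\nicefrac{1}{2}} T)}/(2\abs{\lambda_h}^{\nicefrac{1}{2}}) \leq T$ (obtained from $\abs{\sin(x)} \leq \abs{x}$); hence each summand is dominated by $T \cdot \abs{\mu_h}^2/\abs{\lambda_h}$ up to a factor, and $I \subseteq \bbH$ does the rest. No step poses a genuine obstacle; the main things to get right are the identification of the orthonormal basis of $H_{-\nicefrac{1}{2}}$ and the sign cancellation when combining the two components to form the $\bfH_0$ norm.
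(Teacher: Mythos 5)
Your argument is correct. For the componentwise identities it coincides with the paper's proof: Parseval's identity with respect to the orthonormal bases $(h)_{h\in\bbH}$ of $H_0$ and $(\abs{\lambda_h}^{\nicefrac{1}{2}}h)_{h\in\bbH}$ of $H_{-\nicefrac{1}{2}}$, followed by the variance formulas of Lemma~\ref{lem:gaussian}; the sign pattern $(-1)^i$ and the domination $\abs{\sin(2\abs{\lambda_h}^{\nicefrac{1}{2}}T)}/(2\abs{\lambda_h}^{\nicefrac{1}{2}})\leq T$ are handled correctly, and finiteness follows from the standing summability assumption on $\sum_{h\in\bbH}\abs{\mu_h}^2/\abs{\lambda_h}$ exactly as you say. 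Where you diverge is the first identity: the paper obtains $\E\bigl[\norm{\bfX^I}_{\bfH_0}^2\bigr]=T\norm{\bfP_I\bfB}_{\mathrm{HS}(H,\bfH_0)}^2$ directly from It\^o's isometry together with the fact (quoted from Lemma~2.6 of the companion article) that the wave group $\e^{t\bfA}$ acts isometrically, so the Hilbert--Schmidt norm of the integrand is constant in $s$ and the time integral contributes the factor $T$. You instead add the two componentwise identities and observe that the oscillatory terms cancel because they enter with opposite signs. Both routes are valid; yours is more self-contained (it needs nothing beyond Lemma~\ref{lem:gaussian} and the product structure of $\bfH_0$), while the paper's is more structural and explains \emph{why} the answer is exactly $T$ times a Hilbert--Schmidt norm without any reference to the explicit trigonometric variances.
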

\begin{proof}[Proof of Lemma~\ref{lem:L2norm}]
It\^{o}'s isometry and
Lemma~2.6 in~\cite{DeNauroisJentzenWelti2015ArXiv}
%Lemma~\ref{lem:length_preservation}
imply that
\begin{equation}
\E \bigl[ \norm{ \bfX^I }_{ \bfH_0 }^2 \bigr] = \E \biggl[ \norm[\bigg]{ \int_0^T \e^{ ( T - s ) \bfA } \bfP_I \bfB \dWs }_{ \bfH_0 }^2 \biggr] = T \norm{ \bfP_I \bfB }_{ \mathrm{HS}( H, \bfH_0 ) }^2 =  T \sum_{ h \in I } \frac{  \abs{ \mu_h }^2 }{ \abs{ \lambda_h } } < \infty.
\end{equation}
In addition, Lemma~\ref{lem:gaussian} shows for all $ i \in \{ 1, 2 \} $ that
\begin{equation}
\begin{split}
\E \Bigl[ \norm{ X^{ I, i } }_{ H_{ \nicefrac{1}{2} - \nicefrac{i}{2} } }^2 \Bigr] & = \sum_{ h \in \bbH } \E \Bigl[ \abs[\Big]{ \bigl\langle \abs{ \lambda_h }^{ \nicefrac{i}{2} - \nicefrac{1}{2} } h, X^{ I, i } \bigr\rangle_{ H_{ \nicefrac{1}{2} - \nicefrac{i}{2} } } }^2 \Bigr] \\
& = \frac{1}{2} \sum_{ h \in I } \frac{ \abs{ \mu_h }^2 }{ \abs{ \lambda_h } } \biggl( T + \frac{ \sin \bigl( 2 \abs{ \lambda_h }^{ \nicefrac{1}{2} } T \bigr) }{ (-1)^i \+ 2 \abs{ \lambda_h }^{ \nicefrac{1}{2} } } \biggr) < \infty.
\end{split}
\end{equation}
The proof of Lemma~\ref{lem:L2norm} is thus completed.
\end{proof}
\begin{prop} \label{prop:lower_bound}
Assume the setting in Section~\ref{subsec:setting_lower} and assume $ \inf _{ h \in \bbH } \abs{ \mu_h } > 0 $. Then it holds for all $ I \in \cP( \bbH ) \setminus \{ \bbH \} $ that
\begin{equation}
\E \bigl[ \norm{ \bfX^\bbH }_{ \bfH_0 }^2 \bigr] - \E \bigl[ \norm{ \bfX^I }_{ \bfH_0 }^2 \bigr] = \E \bigl[ \norm{ \bfX^{ \bbH \setminus I } }_{ \bfH_0 }^2 \bigr] \geq T \inf_{ h \in \bbH } \abs{ \mu_{ h } }^2 \sum_{ h \in \bbH \setminus I } \frac{1}{ \abs{ \lambda_h } } > 0.
\end{equation}
\end{prop}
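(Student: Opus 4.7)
The strategy is to apply Lemma~\ref{lem:L2norm} three times and then perform an elementary series estimate; no further tools are required.

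First, I would apply Lemma~\ref{lem:L2norm} to each of the index sets $\bbH$, $I$, and $\bbH \setminus I$ to obtain the identities
\begin{equation*}
\E \bigl[ \norm{ \bfX^\bbH }_{ \bfH_0 }^2 \bigr] = T \sum_{ h \in \bbH } \frac{ \abs{ \mu_h }^2 }{ \abs{ \lambda_h } }, \quad \E \bigl[ \norm{ \bfX^I }_{ \bfH_0 }^2 \bigr] = T \sum_{ h \in I } \frac{ \abs{ \mu_h }^2 }{ \abs{ \lambda_h } }, \quad \E \bigl[ \norm{ \bfX^{ \bbH \setminus I } }_{ \bfH_0 }^2 \bigr] = T \sum_{ h \in \bbH \setminus I } \frac{ \abs{ \mu_h }^2 }{ \abs{ \lambda_h } },
\end{equation*}
each of which is finite by the assumption $\sum_{ h \in \bbH } \frac{ \abs{ \mu_h }^2 }{ \abs{ \lambda_h } } < \infty$ from Section~\ref{subsec:setting_lower}. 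Since $\bbH = I \sqcup ( \bbH \setminus I )$ is a disjoint decomposition and the sum over $\bbH$ is absolutely convergent, the series splits cleanly as $\sum_{ h \in \bbH } = \sum_{ h \in I } + \sum_{ h \in \bbH \setminus I }$, and subtracting the second identity from the first yields the claimed equality $\E [ \norm{ \bfX^\bbH }_{ \bfH_0 }^2 ] - \E [ \norm{ \bfX^I }_{ \bfH_0 }^2 ] = \E [ \norm{ \bfX^{ \bbH \setminus I } }_{ \bfH_0 }^2 ]$.

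Next, to obtain the lower bound I would estimate termwise using $\abs{ \mu_h }^2 \geq \inf_{ h' \in \bbH } \abs{ \mu_{ h' } }^2$ for every $h \in \bbH \setminus I$, giving
\begin{equation*}
\E \bigl[ \norm{ \bfX^{ \bbH \setminus I } }_{ \bfH_0 }^2 \bigr] = T \sum_{ h \in \bbH \setminus I } \frac{ \abs{ \mu_h }^2 }{ \abs{ \lambda_h } } \geq T \bigl( \inf_{ h \in \bbH } \abs{ \mu_h }^2 \bigr) \sum_{ h \in \bbH \setminus I } \frac{ 1 }{ \abs{ \lambda_h } }.
\end{equation*}
Finally, strict positivity of the right-hand side follows from three facts: $\bbH \setminus I$ is non-empty because $I \neq \bbH$; every $\abs{ \lambda_h }$ is a finite positive real number (thanks to $\sup_{ h \in \bbH } \lambda_h < 0$), so at least one summand $\frac{ 1 }{ \abs{ \lambda_h } }$ is strictly positive; and the hypothesis $\inf_{ h \in \bbH } \abs{ \mu_h } > 0$ ensures the prefactor is strictly positive. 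Combining these three steps completes the proof.

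No step presents any real obstacle: Lemma~\ref{lem:L2norm} already encodes all the stochastic content, and the remainder is arithmetic on non-negative series together with bookkeeping of positivity from the standing assumptions.
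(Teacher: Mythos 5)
Your proof is correct, and it reaches the key identity $ \E [ \norm{ \bfX^\bbH }_{ \bfH_0 }^2 ] - \E [ \norm{ \bfX^I }_{ \bfH_0 }^2 ] = \E [ \norm{ \bfX^{ \bbH \setminus I } }_{ \bfH_0 }^2 ] $ by a genuinely different (and more elementary) route than the paper. The paper first invokes Lemma~\ref{lem:solution} to write $ \bfX^I = \bfP_I \bfX^\bbH $ and $ \bfX^{ \bbH \setminus I } = \bfP_{ \bbH \setminus I } \bfX^\bbH $, and then uses orthogonality of these two components together with $ ( \bfP_I + \bfP_{ \bbH \setminus I } ) \bfX^\bbH = \bfX^\bbH $ to get $ \E [ \norm{ \bfX^I }_{ \bfH_0 }^2 ] + \E [ \norm{ \bfX^{ \bbH \setminus I } }_{ \bfH_0 }^2 ] = \E [ \norm{ \bfX^\bbH }_{ \bfH_0 }^2 ] $; only afterwards does it call on Lemma~\ref{lem:L2norm} for the explicit value of $ \E [ \norm{ \bfX^{ \bbH \setminus I } }_{ \bfH_0 }^2 ] $ and the termwise estimate. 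You instead read off all three expectations directly from Lemma~\ref{lem:L2norm} and split the non-negative, absolutely convergent series over the disjoint union $ \bbH = I \cup ( \bbH \setminus I ) $, which gives the same identity without any appeal to Lemma~\ref{lem:solution} or to Hilbert-space orthogonality. What the paper's route buys is a structural statement (the error is exactly the norm of the discarded orthogonal component) that does not depend on having a closed-form series for the second moment; what your route buys is brevity and fewer prerequisites, which is available here precisely because the noise is additive and Lemma~\ref{lem:L2norm} already encodes the stochastic content. The remaining steps --- the termwise bound via $ \inf_{ h \in \bbH } \abs{ \mu_h }^2 $ and the positivity argument from $ I \neq \bbH $, $ \lambda_h < 0 $, and $ \inf_{ h \in \bbH } \abs{ \mu_h } > 0 $ --- coincide with the paper's.
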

\begin{proof}[Proof of~Proposition \ref{prop:lower_bound}] 
Orthogonality and Lemma~\ref{lem:solution} imply for all $ I \in \cP( \bbH ) \setminus \{ \bbH \} $ that
\begin{equation}
\begin{split}
\E \bigl[ \norm{ \bfX^I }_{ \bfH_0 }^2 \bigr] + \E \bigl[ \norm{ \bfX^{ \bbH \setminus I } }_{ \bfH_0 }^2 \bigr] & = \E \bigl[ \norm{ \bfP_I \bfX^\bbH }_{ \bfH_0 }^2 \bigr] + \E \bigl[ \norm{ \bfP_{ \bbH \setminus I } \bfX^\bbH }_{ \bfH_0 }^2 \bigr] \\
& = \E \bigl[ \norm{ ( \bfP_I + \bfP_{ \bbH \setminus I } ) \bfX^\bbH }_{ \bfH_0 }^2 \bigr] = \E \bigl[ \norm{ \bfX^\bbH }_{ \bfH_0 }^2 \bigr].
\end{split}
\end{equation}
This and Lemma~\ref{lem:L2norm} show for all $ I \in \cP( \bbH ) \setminus \{ \bbH \} $ that
\begin{equation}
\begin{split}
\E \bigl[ \norm{ \bfX^\bbH }_{ \bfH_0 }^2 \bigr] - \E \bigl[ \norm{ \bfX^I }_{ \bfH_0 }^2 \bigr] & = \E \bigl[ \norm{ \bfX^{ \bbH \setminus I } }_{ \bfH_0 }^2 \bigr] \\
& = T \sum_{ h \in \bbH \setminus I } \frac{  \abs{ \mu_h }^2 }{ \abs{ \lambda_h } } \geq T \inf_{ h \in \bbH } \abs{ \mu_{ h } }^2 \sum_{ h \in \bbH \setminus I } \frac{1}{ \abs{ \lambda_h } } > 0.
\end{split}
\end{equation}
The proof of Proposition~\ref{prop:lower_bound} is thus completed.
\end{proof}
In Corollary~\ref{cor:lower_bound_inf} and Corollary~\ref{cor:lower_bound_delta} below lower bounds on the weak approximation error with the squared norm as test function are presented. Our proofs of Corollary~\ref{cor:lower_bound_inf} and Corollary~\ref{cor:lower_bound_delta} use the following elementary and well-known lemma (cf., e.g., Proposition~6.4 in Conus et al.~\cite{ConusJentzenKurniawan2014}).
\begin{lemma} \label{lem:sum}
Let $ p \in ( 0 , \infty ) $, $ \delta \in ( - \infty, \nicefrac{1}{2} - \nicefrac{1}{ (2 p ) } ) $. Then it holds for all $ N \in \N $ that
\begin{equation}
\sum_{ n = N + 1 }^{ \infty } n^{ p ( 2 \delta - 1 ) } \geq \frac{ N^{ p ( 2 \delta - 1 ) + 1 } }{ [ p ( 1 - 2 \delta ) - 1 ] 2^{ p ( 1 - 2 \delta ) - 1 } }.
\end{equation}
\end{lemma}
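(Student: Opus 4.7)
The plan is to use the standard integral comparison for decreasing functions, combined with an elementary estimate to turn $(N+1)^{1-\alpha}$ into $(2N)^{1-\alpha}$ at the cost of the factor $2^{\alpha-1}$ appearing in the denominator of the bound.

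First I would introduce the abbreviation $\alpha = p(1-2\delta)$. The hypothesis $\delta < \nicefrac{1}{2} - \nicefrac{1}{(2p)}$ is exactly equivalent to $\alpha > 1$, so that the series $\sum_{n=1}^{\infty} n^{-\alpha}$ converges and the formula $\int_N^\infty x^{-\alpha}\ud x = \frac{N^{1-\alpha}}{\alpha-1}$ is available for every $N \in \N$. Note also that $1-\alpha < 0$, which will be essential for the sign of certain inequalities below.

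Next, since the function $(0,\infty) \ni x \mapsto x^{-\alpha} \in (0,\infty)$ is strictly decreasing, for every $n \in \N$ we have $n^{-\alpha} \geq \int_n^{n+1} x^{-\alpha}\ud x$. Summing from $N+1$ to $\infty$ gives the integral lower bound
\begin{equation*}
\sum_{n=N+1}^{\infty} n^{-\alpha} \;\geq\; \int_{N+1}^{\infty} x^{-\alpha}\ud x \;=\; \frac{(N+1)^{1-\alpha}}{\alpha-1}.
\end{equation*}
This is the first key step.

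The second key step is the elementary observation that $N+1 \leq 2N$ for every $N \in \N$, and since $\alpha - 1 > 0$ this yields $(N+1)^{\alpha-1} \leq (2N)^{\alpha-1}$, or equivalently, because $1-\alpha$ is negative,
\begin{equation*}
(N+1)^{1-\alpha} \;\geq\; (2N)^{1-\alpha} \;=\; 2^{1-\alpha}\, N^{1-\alpha}.
\end{equation*}
Combining this with the integral lower bound gives
\begin{equation*}
\sum_{n=N+1}^{\infty} n^{-\alpha} \;\geq\; \frac{2^{1-\alpha}\, N^{1-\alpha}}{\alpha-1} \;=\; \frac{N^{1-\alpha}}{(\alpha-1)\,2^{\alpha-1}},
\end{equation*}
and resubstituting $\alpha = p(1-2\delta)$ yields exactly the claim. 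There is no real obstacle here; the only place where one has to be careful is to track the sign of $1-\alpha$ when passing from $N+1 \leq 2N$ to the corresponding inequality for the powers, and to verify that the condition on $\delta$ gives $\alpha > 1$ (so the integral is finite and the denominator $\alpha-1$ is positive).
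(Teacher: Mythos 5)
Your proof is correct and follows essentially the same route as the paper's: an integral comparison giving $\sum_{n=N+1}^{\infty} n^{p(2\delta-1)} \geq \int_{N+1}^{\infty} x^{p(2\delta-1)} \dx$, followed by the estimate $N+1 \leq 2N$ applied to a negative power. The substitution $\alpha = p(1-2\delta)$ is merely notational and all sign checks are handled correctly.
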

\begin{proof}[Proof of Lemma~\ref{lem:sum}]
Observe that the assumption that $ \delta \in ( - \infty, \nicefrac{1}{2} - \nicefrac{1}{ (2 p ) } ) $ ensures that $ p ( 2 \delta - 1 ) \in ( - \infty, - 1 ) $. This implies for all $ N \in \N $ that
\begin{equation}
\begin{split}
 \sum_{ n = N + 1 }^{ \infty } n^{ p ( 2 \delta - 1 ) } &=  \sum_{ n = N + 1 }^{ \infty } \int_{ n }^{ n+1 } n^{ p ( 2 \delta - 1 ) } \dx \geq \sum_{ n = N+1 }^{ \infty } \int_{ n }^{ n+1 } x^{ p ( 2 \delta - 1 ) } \dx \\
& = \int_{ N+1 }^{ \infty } x^{ p ( 2 \delta - 1 ) } \dx = - \frac{ ( N + 1 )^{ p ( 2 \delta - 1 ) + 1 } }{ p ( 2 \delta - 1 ) + 1 } \\
& \geq \frac{ N^{ p ( 2 \delta - 1 ) + 1 } }{ [ p ( 1 - 2 \delta ) - 1 ] 2^{ p ( 1 - 2 \delta ) - 1 } }.
\end{split}
\end{equation}
This completes the proof of Lemma~\ref{lem:sum}.
\end{proof}
\begin{cor} \label{cor:lower_bound_inf}
Assume the setting in Section~\ref{subsec:setting_lower}, let $ c \in ( 0 , \infty ) , p \in ( 1 , \infty ) $, let $ e \colon \N \to \bbH $ be a bijection which satisfies for all $ n \in \N $ that $ \lambda_{ e_n } = - c n^p $, and let $ I_N \in \cP( \bbH ) $, $ N \in \N $, be the sets which satisfy for all $ N \in \N $ that $ I_N = \{ e_1, e_2, \ldots, e_N \} \subseteq \bbH $. Then it holds for all $ N \in \N $ that
\begin{equation}
\E \bigl[ \norm{ \bfX^\bbH }^2_{ \bfH_0 } \bigr] - \E \bigl[ \norm{ \bfX^{ I_N } }^2_{ \bfH_0 } \bigr] \geq \frac{ T \inf_{ h \in \bbH } \abs{ \mu_h }^2 N^{ 1 - p } }{ c \+ ( p - 1) 2^{ p - 1 } }.
\end{equation}
\end{cor}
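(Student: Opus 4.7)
The plan is to combine Proposition~\ref{prop:lower_bound} (the main ingredient, which translates the difference in squared norms into a sum over $ \bbH \setminus I $) with a direct application of Lemma~\ref{lem:sum} to the tail series that arises once we reindex via the bijection $ e $.

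First I would apply Proposition~\ref{prop:lower_bound} with $ I = I_N $. Note that $ I_N = \{ e_1, \ldots, e_N \} \subsetneq \bbH $ since $ e $ is a bijection of $ \N $ onto $ \bbH $ (so that $ \bbH \setminus I_N = \{ e_{ N + 1 }, e_{ N + 2 }, \ldots \} $ is non-empty), and Proposition~\ref{prop:lower_bound} yields
\begin{equation*}
\E \bigl[ \norm{ \bfX^\bbH }^2_{ \bfH_0 } \bigr] - \E \bigl[ \norm{ \bfX^{ I_N } }^2_{ \bfH_0 } \bigr] \geq T \inf_{ h \in \bbH } \abs{ \mu_h }^2 \sum_{ h \in \bbH \setminus I_N } \frac{ 1 }{ \abs{ \lambda_h } }.
\end{equation*}
(Should $ \inf_{ h \in \bbH } \abs{ \mu_h } $ fail to be strictly positive, the assertion reduces to the trivial lower bound $ 0 $, which holds by Lemma~\ref{lem:L2norm} together with orthogonality of $ \bfP_{ I_N } $ and $ \bfP_{ \bbH \setminus I_N } $; alternatively one can simply read the intermediate equality in Proposition~\ref{prop:lower_bound}, which does not use the infimum assumption.)

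Next I would use the bijection $ e $ and the identity $ \abs{ \lambda_{ e_n } } = c \+ n^p $ to rewrite
\begin{equation*}
\sum_{ h \in \bbH \setminus I_N } \frac{ 1 }{ \abs{ \lambda_h } } = \sum_{ n = N + 1 }^{ \infty } \frac{ 1 }{ \abs{ \lambda_{ e_n } } } = \frac{ 1 }{ c } \sum_{ n = N + 1 }^{ \infty } n^{ - p },
\end{equation*}
and then invoke Lemma~\ref{lem:sum} with $ \delta = 0 $; the assumption $ p > 1 $ guarantees $ 0 < \nicefrac{1}{2} - \nicefrac{1}{(2p)} $, so that Lemma~\ref{lem:sum} is applicable and gives $ \sum_{ n = N + 1 }^{ \infty } n^{ - p } \geq \frac{ N^{ 1 - p } }{ ( p - 1 ) 2^{ p - 1 } } $.

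Combining the two displays yields the claimed inequality. There is no real obstacle here: the work has already been done in Proposition~\ref{prop:lower_bound} and Lemma~\ref{lem:sum}, and the corollary is purely a matter of plugging in the specific eigenvalue asymptotics $ \abs{ \lambda_{ e_n } } = c \+ n^p $.
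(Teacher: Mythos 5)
Your proof is correct and follows essentially the same route as the paper: apply Proposition~\ref{prop:lower_bound} with $ I = I_N $, reindex the tail sum via the bijection $ e $, and invoke Lemma~\ref{lem:sum} with $ \delta = 0 $ (applicable since $ p > 1 $). Your additional remark covering the case $ \inf_{ h \in \bbH } \abs{ \mu_h } = 0 $, which the corollary's hypotheses do not exclude even though Proposition~\ref{prop:lower_bound} formally assumes strict positivity, is a careful touch that the paper's own proof silently glosses over.
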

\begin{proof}[Proof of Corollary~\ref{cor:lower_bound_inf}]
Proposition~\ref{prop:lower_bound} and Lemma~\ref{lem:sum} prove for all $ N \in \N $ that
\begin{equation}
\begin{split}
\E \bigl[ \norm{ \bfX^\bbH }_{ \bfH_0 }^2 \bigr] - \E \bigl[ \norm{ \bfX^{ I_N } }_{ \bfH_0 }^2 \bigr] & \geq T \inf_{ h \in \bbH } \abs{ \mu_{ h } }^2 \sum_{ h \in \bbH \setminus I_N } \frac{1}{ \abs{ \lambda_h } } = c^{ - 1 } \tp T \inf_{ h \in \bbH } \abs{ \mu_{ h } }^2 \sum_{ n = N+1 }^\infty \frac{1}{ n^p } \\
& \geq \frac{ T \inf_{ h \in \bbH } \abs{ \mu_h }^2  N^{ 1 - p } }{ c \+ ( p - 1) 2^{ p - 1 } }.
\end{split}
\end{equation}
The proof of Corollary~\ref{cor:lower_bound_inf} is thus completed.
\end{proof}
\begin{cor} \label{cor:lower_bound_delta}
Assume the setting in Section~\ref{subsec:setting_lower}, let $ c, p \in ( 0 , \infty ) $, $ \delta \in ( - \infty, \nicefrac{1}{2} - \nicefrac{1}{ (2 p ) } ) $, let $ e \colon \N \to \bbH $ be a bijection which satisfies for all $ n \in \N $ that $ \lambda_{ e_n } = -c n^p $, let $ I_N \in \cP( \bbH ) $, $ N \in \N $, be the sets which satisfy for all $ N \in \N $ that $ I_N = \{ e_1, e_2, \ldots, e_N \} \subseteq \bbH $, and assume for all $ h \in \bbH $ that $ \abs{ \mu_h } = \abs{ \lambda_{ h } }^\delta $. Then it holds for all $ N \in \N $ that
\begin{equation}
\E \bigl[ \norm{ \bfX^\bbH }^2_{ \bfH_0 } \bigr] - \E \bigl[ \norm{ \bfX^{ I_N } }^2_{ \bfH_0 } \bigr] \geq \frac{ T c^{ 2 \delta - 1 } N^{ p ( 2 \delta - 1 ) + 1 } }{ [ p ( 1 - 2 \delta ) - 1 ] 2^{ p ( 1 - 2 \delta ) - 1 } }.
\end{equation}
\end{cor}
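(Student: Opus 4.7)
The plan is to follow the template of the proof of Corollary~\ref{cor:lower_bound_inf}, but \emph{without} lower-bounding $ \abs{ \mu_h }^2 $ by $ \inf_{h \in \bbH} \abs{ \mu_h }^2 $. This matters here because the standing assumption of Proposition~\ref{prop:lower_bound} that $ \inf_{h \in \bbH} \abs{ \mu_h } > 0 $ is not imposed in Corollary~\ref{cor:lower_bound_delta}, and indeed it will typically fail: with $ \abs{ \mu_h } = \abs{ \lambda_h }^\delta $ and $ \delta < 0 $ the coefficients $ \abs{ \mu_{e_n} } $ decay to $ 0 $ as $ n \to \infty $. Instead, the key tool is the \emph{exact} identity
\begin{equation*}
\E \bigl[ \norm{ \bfX^\bbH }_{ \bfH_0 }^2 \bigr] - \E \bigl[ \norm{ \bfX^{ I_N } }_{ \bfH_0 }^2 \bigr] = T \sum_{ h \in \bbH \setminus I_N } \frac{ \abs{ \mu_h }^2 }{ \abs{ \lambda_h } } ,
\end{equation*}
which already appears inside the proof of Proposition~\ref{prop:lower_bound} and which requires no positivity hypothesis on $ \inf_{h \in \bbH} \abs{ \mu_h } $.

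The first step is to re-establish this identity in the present setting. Lemma~\ref{lem:solution} yields $ \bfX^{ I_N } = \bfP_{ I_N } \bfX^\bbH $ and $ \bfX^{ \bbH \setminus I_N } = \bfP_{ \bbH \setminus I_N } \bfX^\bbH $. Since $ \bfP_{ I_N } $ and $ \bfP_{ \bbH \setminus I_N } $ are orthogonal projections onto complementary closed subspaces of $ \bfH_0 $, orthogonality gives $ \E[ \norm{ \bfX^\bbH }_{ \bfH_0 }^2 ] = \E[ \norm{ \bfX^{ I_N } }_{ \bfH_0 }^2 ] + \E[ \norm{ \bfX^{ \bbH \setminus I_N } }_{ \bfH_0 }^2 ] $, and Lemma~\ref{lem:L2norm} applied to the tail index set $ \bbH \setminus I_N $ evaluates the last term as $ T \sum_{ h \in \bbH \setminus I_N } \abs{ \mu_h }^2 / \abs{ \lambda_h } $.

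The second step is purely computational. Substituting $ \abs{ \mu_h } = \abs{ \lambda_h }^\delta $ reduces the summand to $ \abs{ \lambda_h }^{ 2\delta - 1 } $, and then inserting $ \abs{ \lambda_{e_n} } = c n^p $ together with $ \bbH \setminus I_N = \{ e_{ N+1 }, e_{ N+2 }, \ldots \} $ rewrites the sum as $ T c^{ 2\delta - 1 } \sum_{ n = N+1 }^\infty n^{ p ( 2\delta - 1 ) } $. The hypothesis $ \delta \in ( - \infty, \nicefrac{1}{2} - \nicefrac{1}{(2p)} ) $ is exactly what is required to invoke Lemma~\ref{lem:sum}, and a direct application of that lemma produces the claimed lower bound.

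There is no substantive obstacle in this argument; the whole proof is an algebraic reorganisation of already-established identities. The only point worth double-checking is that the exponent constraint imposed in the statement matches the condition $ p ( 2\delta - 1 ) \in ( - \infty, -1 ) $ needed for both convergence of the tail series and the quantitative inequality supplied by Lemma~\ref{lem:sum}, which it does.
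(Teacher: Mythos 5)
Your proof is correct and follows essentially the same route as the paper's: the exact identity $ \E [ \norm{ \bfX^\bbH }_{ \bfH_0 }^2 ] - \E [ \norm{ \bfX^{ I_N } }_{ \bfH_0 }^2 ] = T \sum_{ h \in \bbH \setminus I_N } \abs{ \mu_h }^2 / \abs{ \lambda_h } $, followed by substitution of $ \abs{ \mu_h } = \abs{ \lambda_h }^\delta $ and an application of Lemma~\ref{lem:sum}. Your observation that the hypothesis $ \inf_{ h \in \bbH } \abs{ \mu_h } > 0 $ of Proposition~\ref{prop:lower_bound} can fail here when $ \delta < 0 $, so that the identity should be extracted from the interior of that proposition's proof (orthogonality plus Lemma~\ref{lem:L2norm} applied to $ \bbH \setminus I_N $) rather than obtained by citing the proposition itself, is a valid refinement of the paper's argument, which simply cites Proposition~\ref{prop:lower_bound} at this step.
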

\begin{proof}[Proof of Corollary~\ref{cor:lower_bound_delta}]
Proposition~\ref{prop:lower_bound}, Lemma~\ref{lem:L2norm}, and Lemma~\ref{lem:sum} show for all $ N \in \N $ that
\begin{equation}
\begin{split}
\E \bigl[ \norm{ \bfX^\bbH }^2_{ \bfH_0 } \bigr] - \E \bigl[ \norm{ \bfX^{ I_N } }^2_{ \bfH_0 } \bigr] & = T \sum_{ h \in \bbH \setminus I_N } \frac{ \abs{ \mu_h }^2 }{ \abs{ \lambda_h } } = T \sum_{ h \in \bbH \setminus I_N } \abs{ \lambda_h }^{ 2 \delta - 1 } \\
& = T c^{ 2 \delta - 1 } \sum_{ n = N+1 }^\infty n^{ p ( 2 \delta - 1 ) } \geq \frac{ T c^{ 2 \delta - 1 } N^{ p ( 2 \delta - 1 ) + 1 } }{ [ p ( 1 - 2 \delta ) - 1 ] 2^{ p ( 1 - 2 \delta ) - 1 } }.
\end{split}
\end{equation}
This completes the proof of Corollary~\ref{cor:lower_bound_delta}.
\end{proof}
\subsection{Lower bounds for the weak error of a particular regular test function} \label{subsec:lower_bounds_regular}
The next proposition, Proposition~\ref{prop:lower_bound_exp} below, follows directly from Lemma~\ref{lem:gaussian} above and Lemma~9.5 in Jentzen \& Kurniawan~\cite{JentzenKurniawan2015}.
\begin{prop} \label{prop:lower_bound_exp}
Assume the setting in Section \ref{subsec:setting_lower} and let $ \varphi_i \colon \bfH_0 \to \R $, $ i \in \{ 1, 2 \} $, be the functions which satisfy for all $ i \in \{ 1, 2 \} $, $ ( v_1, v_2 ) \in \bfH_0 $ that \smash{$ \varphi_i( v_1 , v_2 ) = \exp \bigl( - \norm{ v_i }^2_{ H_{ \nicefrac{1}{2} - \nicefrac{i}{2} } } \bigr) $}. Then it holds for all $ i \in \{ 1, 2 \} $, $ I \in \cP( \bbH ) $ that $ \varphi_i \in C_{ \mathrm{b} }^2( \bfH_0, \R ) $ and
\begin{equation}
\E [ \varphi_i( \bfX^I ) ] - \E[ \varphi_i( \bfX^\bbH ) ] \geq \frac{ \E \Bigl[ \norm{ X^{ \bbH, i } }_{ H_{ \nicefrac{1}{2} - \nicefrac{i}{2} } }^2 \Bigr] - \E \Bigl[ \norm{ X^{ I, i } }_{ H_{ \nicefrac{1}{2} - \nicefrac{i}{2} } }^2 \Bigr] }{ \exp \Bigl( 6 \E \Bigl[ \norm{ X^{ \bbH, i } }_{ H_{ \nicefrac{1}{2} - \nicefrac{i}{2} } }^2 \Bigr] \Bigr) }.
\end{equation}
\end{prop}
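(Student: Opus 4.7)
The plan is to reduce Proposition~\ref{prop:lower_bound_exp} to the combination of Lemma~\ref{lem:gaussian} above and Lemma~9.5 in~\cite{JentzenKurniawan2015}, exactly as announced in the sentence preceding the proposition, so that the proof splits into two essentially independent tasks. The first task is to verify that $ \varphi_i \in C^{2}_\mathrm{b}( \bfH_0, \R ) $ for $ i \in \{ 1, 2 \} $: the standing assumption $ \sup_{ h \in \bbH } \lambda_h < 0 $ makes $ ( -A )^{ - \nicefrac{1}{2} } $ bounded on $ H $, so both $ H_{ \nicefrac{1}{2} } \hookrightarrow H_0 $ and $ H_0 \hookrightarrow H_{ -\nicefrac{1}{2} } $ are continuous embeddings, and hence the map $ \bfH_0 \ni ( v_1, v_2 ) \mapsto \norm{ v_i }_{ H_{ \nicefrac{1}{2} - \nicefrac{i}{2} } }^2 \in \R $ is a continuous quadratic form on $ \bfH_0 $. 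Composing this quadratic form with the smooth function $ \R \ni x \mapsto \e^{ -x^2 } \in \R $ yields a $ C^\infty $ function on $ \bfH_0 $, and the required boundedness of $ \varphi_i $ together with its first two Fréchet derivatives is immediate from the elementary fact that $ x \mapsto \e^{ -x^2 } $, $ x \mapsto x \e^{ -x^2 } $, and $ x \mapsto x^2 \e^{ -x^2 } $ are all bounded on $ \R $.

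The second task is to exhibit a Gaussian-series representation for $ \norm{ X^{ I, i } }_{ H_{ \nicefrac{1}{2} - \nicefrac{i}{2} } }^2 $. Since $ \bigl\{ \abs{ \lambda_h }^{ \nicefrac{i}{2} - \nicefrac{1}{2} } h : h \in \bbH \bigr\} $ is an orthonormal basis of $ H_{ \nicefrac{1}{2} - \nicefrac{i}{2} } $, Parseval's identity yields
\begin{equation*}
\norm{ X^{ I, i } }_{ H_{ \nicefrac{1}{2} - \nicefrac{i}{2} } }^2 = \sum_{ h \in \bbH } \abs[\Big]{ \bigl\langle \abs{ \lambda_h }^{ \nicefrac{i}{2} - \nicefrac{1}{2} } h, X^{ I, i } \bigr\rangle_{ H_{ \nicefrac{1}{2} - \nicefrac{i}{2} } } }^2,
\end{equation*}
and Lemma~\ref{lem:gaussian} identifies the summands as squares of independent centred Gaussian random variables which vanish outside $ I $ and whose variances on $ I $ coincide with the corresponding variances appearing in the analogous expansion for $ X^{ \bbH, i } $.

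With these two ingredients in place, it remains to feed the resulting Gaussian-series representations for $ \norm{ X^{ I, i } }_{ H_{ \nicefrac{1}{2} - \nicefrac{i}{2} } }^2 $ and $ \norm{ X^{ \bbH, i } }_{ H_{ \nicefrac{1}{2} - \nicefrac{i}{2} } }^2 $ into Lemma~9.5 in~\cite{JentzenKurniawan2015}, which is formulated precisely to supply lower bounds of the form $ \E[ \e^{ - \norm{ Y }^2 } ] - \E[ \e^{ - \norm{ X }^2 } ] \geq ( \E[ \norm{ X }^2 ] - \E[ \norm{ Y }^2 ] ) / \exp( 6 \+ \E[ \norm{ X }^2 ] ) $ for suitably matched pairs of centred Gaussian vectors. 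The main obstacle I anticipate is purely one of bookkeeping: one must match the variance parameters expected by Lemma~9.5 to the formulas produced by Lemma~\ref{lem:gaussian} and confirm the correct identification of the orthonormal basis of $ H_{ \nicefrac{1}{2} - \nicefrac{i}{2} } $; once those identifications are made, no non-trivial further calculation remains.
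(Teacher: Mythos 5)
Your proposal matches the paper's argument exactly: the paper offers no written proof beyond the remark that the proposition ``follows directly from Lemma~\ref{lem:gaussian} above and Lemma~9.5 in Jentzen \& Kurniawan~\cite{JentzenKurniawan2015}'', which is precisely the reduction you carry out, with the $C^2_{\mathrm{b}}$ verification and the Parseval expansion filled in as the paper leaves implicit. Only a cosmetic slip: $\varphi_i$ is the composition of the squared norm with $x \mapsto \e^{-x}$ (equivalently, of the norm with $x \mapsto \e^{-x^2}$), not of the squared norm with $x \mapsto \e^{-x^2}$; the boundedness facts you invoke are the correct ones for the former reading.
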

\begin{cor} \label{cor:lower_bound_exp}
Assume the setting in Section~\ref{subsec:setting_lower}, let $ c, p \in ( 0 , \infty ) $, $ \delta \in ( - \infty, \nicefrac{1}{2} - \nicefrac{1}{ (2 p ) } ) $, let $ e \colon \N \to \bbH $ be a bijection which satisfies for all $ n \in \N $ that $ \lambda_{ e_n } = -c n^p $, let $ I_N \in \cP( \bbH ) $, $ N \in \N $, be the sets which satisfy for all $ N \in \N $ that $ I_N = \{ e_1, e_2, \ldots, e_N \} \subseteq \bbH $, assume for all $ h \in \bbH $ that $  \abs{ \mu_h } = \abs{ \lambda_{ h } }^\delta $, and let $ \varphi_i \colon \bfH_0 \to \R $, $ i \in \{ 1, 2 \} $, be the functions which satisfy for all $ i \in \{ 1, 2 \} $, $ ( v_1, v_2 ) \in \bfH_0 $ that \smash{$ \varphi_i( v_1 , v_2 ) = \exp \bigl( - \norm{ v_i }^2_{ H_{ \nicefrac{1}{2} - \nicefrac{i}{2} } } \bigr) $}. Then it holds for all $ i \in \{ 1, 2 \} $, $ N \in \N $ that $ \varphi_i \in C_{ \mathrm{b} }^2( \bfH_0, \R ) $ and
\begin{equation}
\E [ \varphi_i( \bfX^{ I_N } ) ] - \E[ \varphi_i( \bfX^\bbH ) ] \geq \biggl[ 1 + \inf_{ x \in [ 2 c^{\nicefrac{1}{2} } T, \infty ) } \frac{ \sin( x ) }{ (-1)^i \+ x } \biggr] \frac{ T c^{ 2 \delta - 1 } 2^{ p ( 2 \delta - 1 ) } N^{ p ( 2 \delta - 1 ) + 1 } }{ [ p ( 1 - 2 \delta ) - 1 ]  \exp \bigl( \tfrac{ 6 \+ p ( 2 \delta - 1 ) T c^{ 2 \delta - 1 } }{ p ( 2 \delta - 1 ) + 1 } \bigr) } > 0.
\end{equation}
\end{cor}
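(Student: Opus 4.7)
The plan is to feed Proposition~\ref{prop:lower_bound_exp} with the concrete eigenvalue and noise structure $\lambda_{e_n}=-cn^p$, $\abs{\mu_h}=\abs{\lambda_h}^\delta$, and then use Lemma~\ref{lem:L2norm} to turn both the numerator and the argument of the exponential on the right-hand side of the Proposition into explicit series that can be estimated with Lemma~\ref{lem:sum}. The membership $\varphi_i\in C_{\mathrm{b}}^2(\bfH_0,\R)$ is already part of the conclusion of Proposition~\ref{prop:lower_bound_exp}.

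For the numerator, Lemma~\ref{lem:L2norm} yields, after factoring one $T$ out of each bracket and substituting $\abs{\mu_h}^2/\abs{\lambda_h}=\abs{\lambda_h}^{2\delta-1}=c^{2\delta-1}n^{p(2\delta-1)}$,
\[
\E\bigl[\norm{X^{\bbH,i}}^2_{H_{\nicefrac{1}{2}-\nicefrac{i}{2}}}\bigr]-\E\bigl[\norm{X^{I_N,i}}^2_{H_{\nicefrac{1}{2}-\nicefrac{i}{2}}}\bigr]=\tfrac{T}{2}\+ c^{2\delta-1}\sum_{n=N+1}^\infty n^{p(2\delta-1)}\biggl(1+\frac{\sin\bigl(2\abs{\lambda_{e_n}}^{\nicefrac{1}{2}}T\bigr)}{(-1)^i\+ 2\abs{\lambda_{e_n}}^{\nicefrac{1}{2}}T}\biggr).
\]
Since $2\abs{\lambda_{e_n}}^{\nicefrac{1}{2}}T=2c^{\nicefrac{1}{2}}n^{\nicefrac{p}{2}}T\geq 2c^{\nicefrac{1}{2}}T$ for every $n\geq1$ and $\abs{\sin(x)/x}\leq 1$ for $x>0$, each bracket is nonnegative and at least $1+\inf_{x\in[2c^{\nicefrac{1}{2}}T,\infty)}\sin(x)/((-1)^i\+ x)$; I pull that constant out of the sum and apply Lemma~\ref{lem:sum} to the remaining $\sum_{n>N}n^{p(2\delta-1)}$. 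The arithmetic simplification $\tfrac{1}{2}\cdot 2^{-(p(1-2\delta)-1)}=2^{p(2\delta-1)}$ then reproduces precisely the announced prefactor $Tc^{2\delta-1}2^{p(2\delta-1)}N^{p(2\delta-1)+1}/[p(1-2\delta)-1]$.

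For the exponent in the denominator, I combine the trivial coordinate-wise estimate $\norm{X^{\bbH,i}}^2_{H_{\nicefrac{1}{2}-\nicefrac{i}{2}}}\leq\norm{\bfX^\bbH}^2_{\bfH_0}$ with Lemma~\ref{lem:L2norm} to get $\E[\norm{X^{\bbH,i}}^2]\leq Tc^{2\delta-1}\sum_{n=1}^\infty n^{p(2\delta-1)}$, and estimate the series by the integral comparison $\sum_{n=1}^\infty n^a\leq 1+\int_1^\infty x^a\dx=a/(a+1)$, valid because $a:=p(2\delta-1)<-1$. This yields $6\E[\norm{X^{\bbH,i}}^2]\leq 6\+ p(2\delta-1)Tc^{2\delta-1}/(p(2\delta-1)+1)$, matching the exponent in the statement; strict positivity of the whole lower bound then reduces to $1+\inf_{x\in[2c^{\nicefrac{1}{2}}T,\infty)}\sin(x)/((-1)^i\+ x)>0$, which follows from $\abs{\sin(x)/x}<1$ for $x>0$. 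The main obstacle is purely bookkeeping: keeping the signs of $p(2\delta-1)$, $p(2\delta-1)+1$, and $p(1-2\delta)-1$ straight while collecting the powers of $2$ coming from Lemma~\ref{lem:sum} so that every factor lands exactly in the form displayed in the claim.
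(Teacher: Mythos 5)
Your proposal is correct and follows essentially the same route as the paper's proof: it computes the numerator via Lemma~\ref{lem:L2norm}, bounds the oscillatory bracket below by $1+\inf_{x\in[2c^{\nicefrac{1}{2}}T,\infty)}\sin(x)/((-1)^i\+ x)$ and the tail sum via Lemma~\ref{lem:sum}, controls the exponent through $\E[\norm{X^{\bbH,i}}^2]\leq\E[\norm{\bfX^\bbH}^2_{\bfH_0}]$ and the integral comparison $\sum_{n\geq1}n^{p(2\delta-1)}\leq p(2\delta-1)/(p(2\delta-1)+1)$, and concludes with Proposition~\ref{prop:lower_bound_exp}. All the sign and power-of-two bookkeeping checks out against the displayed constants.
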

\begin{proof}[Proof of Corollary~\ref{cor:lower_bound_exp}]
Lemma~\ref{lem:L2norm}, Lemma~\ref{lem:sum}, and the fact that $ \forall \tp x \in (0,\infty) \colon \abs[\big]{ \frac{ \sin( x ) }{ x } } < 1 $ prove for all $ i \in \{ 1, 2 \} $, $ N \in \N $ that
\begin{equation} \label{eq:lower_bound_exp,2}
\begin{split}
& \E \Bigl[ \norm{ X^{ \bbH, i } }_{ H_{ \nicefrac{1}{2} - \nicefrac{i}{2} } }^2 \Bigr] - \E \Bigl[ \norm{ X^{ I, i } }_{ H_{ \nicefrac{1}{2} - \nicefrac{i}{2} } }^2 \Bigr] = \frac{1}{2} \sum_{ h \in \bbH \setminus I_N } \frac{ \abs{ \mu_h }^2 }{ \abs{ \lambda_h } } \biggl( T + \frac{ \sin \bigl( 2 \abs{ \lambda_h }^{ \nicefrac{1}{2} } T \bigr) }{ (-1)^i \+ 2 \abs{ \lambda_h }^{ \nicefrac{1}{2} } } \biggr) \\
& \geq \biggl( 1 + \inf_{ h \in \bbH } \frac{ \sin \bigl( 2 \abs{ \lambda_h }^{ \nicefrac{1}{2} } T \bigr) }{ (-1)^i \+ 2 \abs{ \lambda_h }^{ \nicefrac{1}{2} } T } \biggr) \frac{T}{2} \sum_{ h \in \bbH \setminus I_N } \abs{ \lambda_h }^{ 2 \delta - 1 } \\
& \geq \biggl( 1 + \inf_{ x \in [ 2 c^{\nicefrac{1}{2} } T, \infty ) } \frac{ \sin( x ) }{ (-1)^i \+ x } \biggr) \frac{T c^{ 2 \delta - 1 } }{2} \sum_{ n = N+1 }^\infty n^{ p ( 2 \delta - 1 ) } \\
& \geq \biggl( 1 + \inf_{ x \in [ 2 c^{\nicefrac{1}{2} } T, \infty ) } \frac{ \sin( x ) }{ (-1)^i \+ x } \biggr) \frac{ T c^{ 2 \delta - 1 } 2^{ p ( 2 \delta - 1 ) } N^{ p ( 2 \delta - 1 ) + 1 } }{ [ p ( 1 - 2 \delta ) - 1 ] } > 0.
\end{split}
\end{equation}
Furthermore, the assumption that $ \delta \in ( - \infty, \nicefrac{1}{2} - \nicefrac{1}{ (2 p ) } ) $ ensures that $ p ( 2 \delta - 1 ) \in ( - \infty, - 1 ) $. Hence, we obtain that
\begin{equation} \label{eq:lower_bound_exp,1}
\begin{split}
\sum_{ n = 1}^\infty n^{ p ( 2 \delta - 1 ) } & \leq 1 + \sum_{ n=1 }^{ \infty } \int_{ n }^{ n + 1 } x^{ p ( 2 \delta - 1 ) } \dx = 1 + \int_{1}^{ \infty } x^{ p ( 2 \delta - 1 ) } \dx \\
& = 1 - \frac{ 1 }{ p ( 2 \delta - 1 ) + 1 } = \frac{ p ( 2 \delta - 1 ) }{ p ( 2 \delta - 1 ) + 1 }.
\end{split}
\end{equation}
Lemma~\ref{lem:L2norm} and \eqref{eq:lower_bound_exp,1} imply for all $ i \in \{ 1, 2 \} $ that
\begin{equation}
\begin{split}
\exp \Bigl( - 6 \E \Bigl[ \norm{ X^{ \bbH, i } }_{ H_{ \nicefrac{1}{2} - \nicefrac{i}{2} } }^2 \Bigr] \Bigr) & \geq \exp \bigl( - 6 \E \bigl[ \norm{ \bfX^\bbH }_{ \bfH_0 }^2 \bigr] \bigr) = \exp \biggl( - 6 \+ T \+ c^{ 2 \delta - 1 } \sum_{ n = 1}^\infty n^{ p ( 2 \delta - 1 ) } \biggr) \\
& \geq \exp \biggl( - \frac{ 6 \+ p ( 2 \delta - 1 ) T c^{ 2 \delta - 1 } }{ p ( 2 \delta - 1 ) + 1 } \biggr) > 0.
\end{split}
\end{equation}
Combining this and \eqref{eq:lower_bound_exp,2} with Proposition~\ref{prop:lower_bound_exp} concludes the proof of Corollary~\ref{cor:lower_bound_exp}.
\end{proof}
Corollary~\ref{cor:lower_bound_laplacian} below specifies Corollary~\ref{cor:lower_bound_exp} to the case where the linear operator $ A \colon D(A) \subseteq H \to H $ in the setting in Section~\ref{subsec:setting_lower} is the Laplacian with Dirichlet boundary conditions on $ H $. It is an immediate consequence of Corollary~\ref{cor:lower_bound_exp}.
\begin{cor} \label{cor:lower_bound_laplacian}
Assume the setting in Section~\ref{subsec:setting_lower}, let $ \delta \in ( - \infty, \nicefrac{1}{4} ) $, let $ e \colon \N \to \bbH $ be a bijection which satisfies for all $ n \in \N $ that $ \lambda_{ e_n } = - \pi^2 n^2 $, let $ I_N \in \cP( \bbH ) $, $ N \in \N $, be the sets which satisfy for all $ N \in \N $ that $ I_N = \{ e_1, e_2, \ldots, e_N \} \subseteq \bbH $, assume for all $ h \in \bbH $ that $  \abs{ \mu_h } = \abs{ \lambda_{ h } }^\delta $, and let $ \varphi_i \colon \bfH_0 \to \R $, $ i \in \{ 1, 2 \} $, be the functions which satisfy for all $ i \in \{ 1, 2 \} $, $ ( v_1, v_2 ) \in \bfH_0 $ that \smash{$ \varphi_i( v_1 , v_2 ) = \exp \bigl( - \norm{ v_i }^2_{ H_{ \nicefrac{1}{2} - \nicefrac{i}{2} } } \bigr) $}. Then it holds for all $ i \in \{ 1, 2 \} $, $ N \in \N $ that $ \varphi_i \in C_{ \mathrm{b} }^2( \bfH_0, \R ) $ and
\begin{equation}
\E [ \varphi_i( \bfX^{ I_N } ) ] - \E[ \varphi_i( \bfX^\bbH ) ] \geq \biggl[ 1 + \inf_{ x \in [ 2 \pi T, \infty ) } \frac{ \sin( x ) }{ (-1)^i \+ x } \biggr] \frac{ T ( 4 \pi^2 )^{ 2 \delta - 1 } N^{ 4 \delta - 1 } }{ [ 1 - 4 \delta ]  \exp \bigl( \tfrac{ 12 ( 2 \delta - 1 ) T \pi^{ 4 \delta - 2 } }{ 4 \delta - 1 } \bigr) } > 0.
\end{equation}
\end{cor}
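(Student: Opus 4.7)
The plan is to derive Corollary~\ref{cor:lower_bound_laplacian} as a direct specialization of Corollary~\ref{cor:lower_bound_exp} by substituting the eigenvalue parameters $c = \pi^2$ and $p = 2$ corresponding to the Dirichlet Laplacian. All structural hypotheses needed for Corollary~\ref{cor:lower_bound_exp}, namely $c, p \in (0,\infty)$, the bijection $e \colon \N \to \bbH$ with $\lambda_{e_n} = -cn^p$, the specification $\abs{\mu_h} = \abs{\lambda_h}^\delta$, and the exponential test function, are inherited verbatim from the present hypotheses. Moreover the range constraint $\delta \in (-\infty, \nicefrac{1}{2} - \nicefrac{1}{(2p)})$ of Corollary~\ref{cor:lower_bound_exp} becomes $\delta \in (-\infty, \nicefrac{1}{4})$ when $p = 2$, which is exactly the assumption imposed here.

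The only work remaining is therefore bookkeeping: I would rewrite each factor in the lower bound from Corollary~\ref{cor:lower_bound_exp} after the substitution $c = \pi^2, p = 2$. Explicitly, $2c^{\nicefrac{1}{2}} T = 2 \pi T$ produces the infimum over $[2\pi T, \infty)$; the combined power is
\begin{equation*}
c^{2\delta-1} \cdot 2^{p(2\delta-1)} = \pi^{2(2\delta-1)} \cdot 2^{2(2\delta-1)} = (4\pi^2)^{2\delta-1};
\end{equation*}
the exponent on $N$ reduces via $p(2\delta-1) + 1 = 4\delta - 1$; the denominator constant becomes $p(1 - 2\delta) - 1 = 1 - 4\delta$; and the argument of the exponential simplifies to
\begin{equation*}
\frac{6 p (2\delta - 1) T c^{2\delta-1}}{p(2\delta-1) + 1} = \frac{12 (2\delta - 1) T \pi^{4\delta - 2}}{4\delta - 1}.
\end{equation*}
Assembling these pieces reproduces the asserted inequality exactly.

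Positivity of the right-hand side is automatic: the factor $[1 + \inf_{x \in [2\pi T, \infty)} \sin(x)/((-1)^i x)]$ is strictly positive because $\abs{\sin(x)/x} < 1$ for all $x \in (0,\infty)$ (this is precisely how positivity was obtained in Corollary~\ref{cor:lower_bound_exp}), the prefactor $T (4\pi^2)^{2\delta-1} N^{4\delta-1}$ is positive, and $1 - 4\delta > 0$ by hypothesis. The statement $\varphi_i \in C_{\mathrm{b}}^2(\bfH_0, \R)$ is inherited directly from Corollary~\ref{cor:lower_bound_exp}. No genuine obstacle arises; the entire argument is an algebraic specialization, and the main point of caution is merely keeping the exponents consistent (in particular verifying $p(2\delta-1) + 1 = 4\delta - 1$ and $(4\pi^2)^{2\delta-1} = \pi^{4\delta-2} \cdot 2^{4\delta-2}$) before citing Corollary~\ref{cor:lower_bound_exp} to conclude.
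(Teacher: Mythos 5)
Your proposal is correct and coincides with the paper's own treatment: the paper states that Corollary~\ref{cor:lower_bound_laplacian} is an immediate consequence of Corollary~\ref{cor:lower_bound_exp}, obtained by the very substitution $c=\pi^2$, $p=2$ that you carry out, and your algebraic simplifications (the condition $\delta<\nicefrac{1}{4}$, the factor $(4\pi^2)^{2\delta-1}$, the exponent $4\delta-1$, the constant $1-4\delta$, and the argument of the exponential) all check out.
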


\section*{Acknowledgements}
This project has been partially supported through the ETH Research Grant \mbox{ETH-47 15-2}
``Mild stochastic calculus and numerical approximations for nonlinear stochastic evolution equations with L\'evy noise''.

\printbibliography
%\bibliographystyle{acm}
%\bibliography{BibtexMath}

\end{document}